\renewcommand*\env@matrix[1][\arraystretch]{%
  \edef\arraystretch{#1}%
  \hskip -\arraycolsep
  \let\@ifnextchar\new@ifnextchar
  \array{*\c@MaxMatrixCols c}}
\numberwithin{equation}{section}
\newcounter{fakecnt}[subsection]
\numberwithin{equation}{section}
\newtheorem{thm}{\bf Theorem}[section]
\theoremstyle{definition}
\newtheorem{cor}[thm]{\bf Corollary} 
\newtheorem{prop}[thm]{\bf Proposition}
\theoremstyle{definition}
\theoremstyle{definition}
\newtheorem{dfn}[thm]{\bf Definition}
\newtheorem{rmk}[thm]{\bf Remark}
\newtheorem{ex}[thm]{\bf Example}
\newtheorem*{theorem*}{Theorem}{\bf}{\it}
\newtheorem*{proposition*}{Proposition}{\bf}{\it}
\newtheorem*{observation*}{Observation}{\bf}{\it}
\newtheorem*{lemma*}{Lemma}{\bf}{\it}
\newtheorem*{Mthm*}{\bf Main Theorem}
\theoremstyle{definition}
\newtheorem*{cor*}{Corollary}
\newtheorem*{prp*}{Proposition}
\newtheorem*{rmk*}{Remark}
\newtheorem{dfn*}{\bf Definition}[section]
\theoremstyle{definition}
\theoremstyle{remark}
\newtheorem{remark}[thm]{\bf Remark}
\def\XXint#1#2#3{{\setbox0=\hbox{$#1{#2#3}{\int}$ }
\vcenter{\hbox{$#2#3$ }}\kern-.6\wd0}}
\DeclareMathOperator{\diam}{diam}
\DeclareMathOperator{\conv}{\textrm{conv}}
\DeclareMathOperator{\dist}{\textrm{dist}}
\newcommand{\vertiii}[1]{{\left\vert\kern-0.35ex\left\vert\kern-0.35ex\left\vert #1 \right\vert\kern-0.35ex\right\vert\kern-0.35ex\right\vert}}
\newcommand{\coo}{\mathrm{c}_{00}}
\newcommand{\co}{\mathrm{c}_0}
\begin{document}
\title[]{Remarks on the FPP in Banach spaces with unconditional Schauder basis}
\thanks{The research was partially supported by FUNCAP/CNPq/PRONEX Grant 00068.01.00/15.\\
{\it Date: 22.07.2023}}

\author{Cleon S. Barroso}
\address{Department of Mathematics, Federal University of  Cear\'a, Cear\'a, Fortaleza 60455-360, Brazil}
\email{cleonbar@mat.ufc.br}
 
\keywords{Banach spaces, Schauder basis, unconditional basis, boundedly complete basis, fixed point property, affine nonexpansive mappings}
\subjclass[2010]{47H10, 46B03, 46B20}

\begin{abstract}
This paper brings new results on the FPP in Banach spaces $X$ with a Schauder basis. We first deal with the problem of whether there is a Banach space isomorphic to $\co$ having the FPP. We show that the answer is negative if $X$ contains a pre-monotone basic sequence equivalent to the unit basis of $\co$. We then study sufficient conditions to ensure the existence of such sequences. Interesting results are obtained, including the case when $X$ has a $1$-suppression unconditional basis and its unit ball fails the PCP. With regarding the weak-FPP, we establish two fixed-point results. First, we show that under certain conditions this property is invariant under Banach-Mazur distance one. Next, it is shown that when the basis is either $1$-suppression unconditional or $1$-spreading then $X$ has the weak-FPP provided that a Rosenthal's type property on block basis is verified.
\end{abstract}

\maketitle

%\tableofcontents

\section{Introduction}\label{sec:1}
A closed bounded convex subset $C$ of a Banach space $X$ is said to have the {\it fixed point property} (FPP for short) when every nonexpansive (i.e., $1$-Lipschitz continuous) mapping $T\colon C\to C$ has a fixed point. We say that $X$ has the FPP if every such $C$ does. The FPP emerged in the 1960s, and has since been a highly active research area, highlighting not only its relevance per se, but also its links with Banach space theory. We refer the works of F. Browder \cite{Brd1,Brd2} and W. Kirk \cite{Ki} who obtained the first positive results in Hilbert, uniformly convex or, more generally, reflexive spaces with normal structure. In 1981 B. Maurey \cite{M} proved that reflexive subspaces of $L_1[0,1]$ have FPP. In contrast, D. Alspach \cite{A} exhibited a weakly compact convex set in $L_1[0,1]$ failing it. Noteworthily, P. Dowling and C. Lennard \cite{DLT2} proved that nonreflexive subspaces of $L_1[0,1]$ fail FPP. One can go through \cite{BP} for recent studies on FPP in generalized Lebesgue spaces. Many of the issues arising in the study of the FPP are in fact intrinsic manifestations of the geometric nature of norms. It is then interesting to know which renormings of $X$ have FPP. Recall that $\co$ and $\ell_1$ are standard examples of spaces for which FPP fails. Additionally, $\ell_1(\Gamma)$ ($\Gamma$ uncountable) and $\ell_\infty$ are examples of spaces that cannot be equivalently renormed to have FPP \cite{DLT1, GK}. Hern\'andez-Linares \cite{H-L} shows that $L_1[0,1]$ can be renormed to have the FPP for affine nonexpansive maps. In \cite{DJLT, DLT2, DLT3} the authors consider the impact of asymptotically isometric copies of $\ell_1$ and $\co$ on the failure of FPP. However, in \cite[Theorem 1]{DJLT} it is shown that the family $\{ \| \cdot\|_\gamma\}_{\gamma}$ of renormings of $\ell_1$ given by $\| x\|_\gamma=\sup_{n\in \mathbb{N}}\gamma_n \sum_{k=n}^\infty | \xi_k|$, $x=(\xi_n)_n \in \ell_1$, where $\gamma=\{ \gamma_n\}_{n=1}^\infty\subset (0,1)$ strictly increases to $1$, fail to contain asymptotically isometric copies of $\ell_1$. Surprisingly, in 2008 P. K. Lin \cite{Lin2} proved that $(\ell_1, \| \cdot\|_{\gamma_\star})$ has the FPP for $\gamma_\star=\{ \frac{8^n}{1 + 8^n}\}_{n=1}^\infty$. This became the first example of a nonreflexive space with the FPP. In addition to Lin's result, it is worth pointing out a 2009's result of Domingu\'ez Benavides \cite{D-B} asserting that every reflexive space can be renormed to have fixed point property. This is in fact a meaningful improvement of a result of van Dust \cite[Theorem 1]{vD} which encompasses the class of all separable reflexive spaces. It should be stressed, however, that the question of whether reflexive spaces have FPP remains open (even for super-reflexive spaces). 

%%%%%%%%%%%%%%%%%%%%%%%%%%%%%%

\subsection{Goal}\label{sec:2}
Within the context of unconditional basis, one of the best-known fixed point results, Lin's theorem \cite{Lin1}, states that if a Banach space $X$ has a $\mathcal{D}$-unconditional basis with $\mathcal{D}<\frac{1}{2} (\sqrt{33} - 3)$, then {\it every weakly compact convex subset} $C$  has the FPP (weak-FPP for short). In light of this result, we are naturally led to ask whether $\mathcal{D}<\frac{1}{2} (\sqrt{33} - 3)$ can be relaxed. Precisely:

\begin{itemize}
\item[($\mathcal{Q}1$)] Does every $X$ with an unconditional basis have the weak-FPP?\vskip .15cm
\item[($\mathcal{Q}2$)] Does every $X$ with a $1$-suppression unconditional basis have the weak-FPP?\vskip .15cm
\end{itemize}

One technical difficulty behind these issues is that subsequences of weakly null sequences can have large unconditional constants. Despite that, Lin \cite{Lin1} showed that under super-reflexivity ($\mathcal{Q}2$) has an affirmative answer. However, Lin's approach does not seem to be effective in general spaces with such bases. In line with these matters, the author \cite{Bar} gave a small step towards ($\mathcal{Q}1$) by proving the following improvement: Every Banach space with a shrinking $\mathcal{D}$-unconditional basis such that $\mathcal{D}<\sqrt{6}-1$ has the weak-FPP. Another related question is

\begin{itemize}
\item[($\mathcal{Q}3$)] Can $\co$ be equivalently renormed to have the FPP? 
\end{itemize}

Although one knows that $\co$ has a $1$-unconditional, up to now there is no partial positive answer and this has been in fact a difficult open problem in the theory. 

The goal of the present paper, which can be seen as a continuation of the work started in \cite{Bar}, is to take one further step towards the issues ($\mathcal{Q}1$), ($\mathcal{Q}2)$ and ($\mathcal{Q}3$), drawing attention to some facts that are implicitly contained in the literature and their respective links with the theory. Here we are mainly concerned with the FPP for the class of affine nonexpansive mappings. That is, nonexpansive maps $T\colon C\to C$ satisfying
\[
T(\lambda x+ (1- \lambda) y)= \lambda T(x) + (1-\lambda)T(y) \text{ for all } \lambda\in [0,1] \text{ and } x, y\in C.
\] 

In Section \ref{sec:3} we set up the notation, definitions and give some preliminary propositions. Our results are stated and proved in Section \ref{sec:4}. We finish the paper in Section \ref{sec:11} with some examples and further implications of our results. 

%%%%%%%%%%%%%%%%%%%%%%%%%%

\section{Preliminaries}\label{sec:3}
Our notation is standard and follows \cite{AK, FHHMZ} for the most part. We will recall some notions for completeness' sake. $\co$ is the Banach space of real sequences tending to zero, under the supremum norm, and $\ell_1$ is the Banach space of absolutely summable real sequences with its usual $\ell_1$-norm. A sequence $(x_n)$ in a Banach space $X$ is called a {\it semi-normalized} if $0< \inf\| x_n\| \leq \sup\|x_n\|< \infty$. It is called {\it basic} if it is a Schauder basis for its closed linear span $[x_i]$. If $(x_n)$ and $(y_n)$ are basic sequences in Banach spaces $X$ and $Y$, respectively, we say that $(x_n)$ $C$-dominates $(y_n)$ for $C\geq 1$ (or that $(y_n)$ is $C$-dominated by $(x_n)$), denoted by $(y_n) \lesssim_C (x_n)$, if $\| \sum_{i=1}^n a_i y_i\| \leq C\| \sum_{i=1}^n a_i x_i\|$ for all scalars $(a_i)_{i=1}^n$. They are said to be equivalent if $(y_n) \lesssim_A (x_n)\lesssim_B (y_n)$. Thus, two basic sequences $(x_n)$ and $(y_n)$ are equivalent if $[x_n]$ and $[y_n]$ are linearly isomorphic. Clearly every basic sequence equivalent to the unit basis of $\co$ is semi-normalized and weakly null. For brevity we will call such sequences as {\it $\co$-sequences}. A basic sequence $(x_n)$ is called {\it spreading} (resp. {\it $1$-spreading}) if it is equivalent (resp. $1$-equivalent) to all of its subsequences. A Schauder basis $(e_i)$ is called {\it $\mathcal{D}$-unconditional} if $\| \sum_{i=1}^n \epsilon_i c_i e_i\| \leq \mathcal{D}\| \sum_{i=1}^n c_i e_i\|$ for all scalars $(c_i)_{i=1}^n\subset \mathbb{R}$ and $(\epsilon_i)_{i=1}^n\subset\{ -1,1\}$. Further, $(e_i)$ is called {\it $\beta$-suppression unconditional} if for all $n$, scalars $(c_i)_{i=1}^n$ and $F\subset \{ 1, \dots, n\}$, $\| \sum_{i\in F} c_i x_i\| \leq \beta \| \sum_{i=1}^n c_i e_i\|$. It is easily seen that if $(e_i)$ is $\mathcal{D}$-unconditional, then it is $\frac{1}{2}(\mathcal{D}+1)$-suppression unconditional. Also, if $(e_i)$ is $\beta$-suppression unconditional, it is $2\beta$-unconditional. A {\it conditional basis} is a Schauder basis that is not unconditional. A Schauder basis $(e_i)$ for a Banach space $X$ is called {\it boundedly complete} if for every sequence $(a_i)_{i=1}^\infty$ in $\mathbb{R}$ with $\sup_{N\geq 1}\big\| \sum_{i=1}^N a_i e_i\big\| < \infty$, the series $\sum_{i=1}^\infty a_i e_i$ converges in norm. If $\sup_{N\geq 1}\| \sum_{i=1}^N e_i\|< \infty$ then it is called {\it strongly bounded}. $(e_i)$ is called {\it shrinking} if $(e^*_i)$ (coefficient functionals) is a Schauder basis for $X^*$. For example, the unit basis of $\co$ is shrinking and $1$-unconditional, but not boundedly complete, while the unit basis of $\ell_1$ is $1$-unconditional, not shrinking and boundedly complete. It is well known (see \cite[Proposition 11.3.6]{AK}) that every semi-normalized weakly null $1$-spreading basic sequence is $1$-suppression unconditional. For $n\in\mathbb{N}$, let $P_n, R_n\colon X\to X$ denote the natural projections given by $P_n(x)=\sum_{j=1}^n e^*_j(x) e_j$ and $R_n:= I - P_n$, respectively. The basis $(e_i)$ is called pre-monotone ($\beta$-premonotone) if $\| R_{n}(x)\|\leq \| x\|$ ($\| R_{n}(x)\|\leq \beta \|x\|$) for all $x\in X$ and $n\in \mathbb{N}$ (cf. \cite{BP}). $(e_i)$ is called monotone (resp. bimonotone, strongly bimonotone) if $\mathcal{K}_m:=\sup_n\| P_n\|=1$ (resp. $\mathcal{K}_b:=\sup_{m<n}\| P_{[m,n]}\|=1$, $\mathcal{K}_{sb}:=\sup_{A\in[\mathbb{N}]^{<\omega}}\big( \| P_A\| \vee \| I- P_A\|\big)=1$). Here $[\mathbb{N}]^{<\omega}$ denotes the family of all finite subsets of $\mathbb{N}$. Thus every bimonotone basis is pre-monotone, and every $1$-suppression unconditional basis is bimonotone. Note that if $(e_i)$ is a (spreading) basis and $Q_m(x)=\sum_{i=m}^\infty e^*_i(x) e_i$, then $\vertiii{x}= \sup_{m\in\mathbb{N}} \|Q_{m}(x)\|$ is an equivalent norm on $X$ for which $(e_i)$ is pre-monotone (and spreading). If $(e_i)$ is conditional and spreading, then the summing functional defined by $\mathfrak{s}\big( \sum_i a_i e_i\big)=\sum_i a_i$ is well defined and bounded (cf. \cite[p. 1212]{AMS}). A sequence of non-zero vectors $(u_n)$ is called a block basic sequence of $(e_i)$, if there exist a sequence $(F_n)\subset \mathbb{N}]^{<\omega}$ with $\max F_n< \min F_{n+1}$ for all $n$, and scalars $(\lambda_n)$ with $\lambda_i\neq 0$ for all $i\in F_n$ and $n\in \mathbb{N}$ such that $u_n=\sum_{i\in F_n} \lambda_i e_i$ for all $n\in \mathbb{N}$. We then call $F_n$ as the {\it support} of $u_n$. 

We close this section with two propositions. The first one is easy and reads:

\begin{prop}\label{prop:1sec3} Let $X$ be a Banach space with a $\beta$-premonotone Schauder basis $(e_i)$. Then every block basic sequence $(u_i)$ of $(e_i)$ is $\beta$-premonotone. 
\end{prop}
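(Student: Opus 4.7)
The plan is a direct unwinding of definitions, exploiting the fact that the support gaps of a block basic sequence let us identify the tail projection in $[u_i]$ with a tail projection of $(e_i)$ in $X$.

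Write $u_n = \sum_{i \in F_n} \lambda_i e_i$ with $\max F_n < \min F_{n+1}$, and let $P^u_n$, $R^u_n = I - P^u_n$ denote the natural projections associated with $(u_i)$ on $[u_i]$. Fix $N \ge 1$ and any $x = \sum_{i=1}^N a_i u_i \in [u_i]$; by density and continuity of $R^u_n$ it suffices to handle such finitely supported $x$. Expanding the $u_i$ in the basis $(e_i)$, we obtain
\[
x \;=\; \sum_{i=1}^N a_i \sum_{j \in F_i} \lambda_j e_j \;=\; \sum_{j \in \bigcup_{i=1}^N F_i} c_j e_j,
\]
where $c_j = a_i \lambda_j$ for the unique $i$ with $j \in F_i$. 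Setting $m_n := \max F_n$, the block-disjointness of supports gives
\[
R^u_n(x) \;=\; \sum_{i=n+1}^N a_i u_i \;=\; \sum_{j > m_n} c_j e_j \;=\; R_{m_n}(x),
\]
where the last tail projection is computed in $X$ with respect to $(e_i)$.

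Now $\beta$-premonotonicity of $(e_i)$ applied to $x \in X$ yields $\|R_{m_n}(x)\| \le \beta \|x\|$, whence $\|R^u_n(x)\| \le \beta \|x\|$. Since this holds for every $n$ and every $x$ in a dense subset of $[u_i]$, the block basic sequence $(u_i)$ is itself $\beta$-premonotone. The only step requiring care is the identification $R^u_n(x) = R_{m_n}(x)$, and this is immediate from the hypothesis $\max F_n < \min F_{n+1}$; there is no substantive obstacle beyond bookkeeping.
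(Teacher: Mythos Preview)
Your proof is correct and is precisely the direct verification the paper has in mind; the paper omits the argument entirely, calling the proposition ``easy,'' and your identification $R^u_n(x)=R_{m_n}(x)$ via the block-support condition is the natural way to see it.
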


The second one, whose proof is contained in \cite[Lemma 2.5]{F}, is the following.%, but we include it here for completeness. 

\begin{prop}\label{prop:2sec3} Let $(x_n)$ be a semi-normalized basic sequence in the unit ball of a Banach space $X$. If $(x_n)$ dominates all of its subsequences, then there exist $C\geq 1$ and a subsequence $(x'_n)$ of $(x_n)$ such that $(x_n)$ $C$-dominates each subsequence of $(x'_n)$. 
\end{prop}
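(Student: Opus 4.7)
The plan is a proof by contradiction: I assume the conclusion fails and construct a single infinite subsequence $N^* = (n^*_j)_j$ of $\mathbb{N}$ for which $(x_n)$ does not $C$-dominate $(x_{n^*_j})_j$ for any finite $C$, contradicting the hypothesis that every subsequence is dominated with some constant. Unpacking the failure of the conclusion, for every infinite $N \subseteq \mathbb{N}$ and every $C \geq 1$ there is an infinite $N' \subseteq N$ together with a finite witness: scalars $(a_j)_{j=1}^L$ and indices $m_1 < \cdots < m_L$ in $N'$ satisfying $\|\sum_j a_j x_{m_j}\| > C\,\|\sum_j a_j x_j\|$. I will splice such witnesses, with ever-larger thresholds $C_i$, into $N^*$ as consecutive blocks.

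For the induction, suppose the first $\ell_{i-1}$ terms $n_1^* < \cdots < n_{\ell_{i-1}}^*$ of $N^*$ have been chosen. First I apply the hypothesis to the shift subsequence $(\ell_{i-1}+1, \ell_{i-1}+2, \ldots)$ of $\mathbb{N}$, obtaining a finite constant $\beta_{i-1}$ such that
\[
\Bigl\|\sum_k c_k\, x_{k+\ell_{i-1}}\Bigr\| \leq \beta_{i-1}\,\Bigl\|\sum_k c_k\, x_k\Bigr\|
\]
for every finite scalar sequence $(c_k)$. I then set $C_i > i\,\beta_{i-1}$ and invoke the failure of the conclusion applied to the infinite set $\{m \in \mathbb{N} : m > n_{\ell_{i-1}}^*\}$, obtaining an infinite subsequence $N_i$ of it and a witness $(a_j^{(i)})_{j=1}^{L_i}$ with indices $m_1^{(i)} < \cdots < m_{L_i}^{(i)}$ in $N_i$ satisfying $\|\sum_j a_j^{(i)} x_{m_j^{(i)}}\| > C_i\,\|\sum_j a_j^{(i)} x_j\|$. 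I append these indices to $N^*$ as terms $\ell_{i-1}+1, \ldots, \ell_{i-1}+L_i =: \ell_i$.

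To finish, I test $N^*$ against the coefficient sequence $b_j$ equal to $a_{j-\ell_{i-1}}^{(i)}$ on $\{\ell_{i-1}+1, \ldots, \ell_i\}$ and zero elsewhere. The numerator $\|\sum_j b_j x_{n^*_j}\|$ equals the witness numerator and so exceeds $C_i\,\|\sum_k a_k^{(i)} x_k\|$, while the denominator $\|\sum_j b_j x_j\| = \|\sum_k a_k^{(i)} x_{k+\ell_{i-1}}\|$ is at most $\beta_{i-1}\,\|\sum_k a_k^{(i)} x_k\|$. Thus the ratio exceeds $C_i/\beta_{i-1} > i$ for each $i$, so $(x_n)$ cannot $C$-dominate $(x_{n^*_j})_j$ for any finite $C$, contradicting the hypothesis applied to $N^*$. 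The main point requiring care is the ordering in the induction: $\beta_{i-1}$ must be fixed (via the hypothesis applied to the shift subsequence) \emph{before} the threshold $C_i$ is chosen, since $\beta_{i-1}$ depends on $\ell_{i-1}$ and can a priori grow rapidly in $i$; but no uniform control on $\beta_{i-1}$ is needed, because the inflation is absorbed into $C_i$.
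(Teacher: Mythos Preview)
Your argument is correct. The contradiction is set up cleanly: the key idea of first fixing the shift-domination constant $\beta_{i-1}$ (using the hypothesis applied to the tail $(x_{k+\ell_{i-1}})_k$) and only then inflating the threshold $C_i > i\beta_{i-1}$ is exactly what is needed, and your final ratio computation is right. One tiny point you might make explicit for the base step: take $\ell_0=0$, $\beta_0=1$, and start with $N_1=\mathbb{N}$; also note that since $(x_n)$ is basic, the witness denominator $\|\sum_k a_k^{(i)} x_k\|$ is strictly positive, so the ratio is well defined.

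As for comparison with the paper: the paper does not supply its own proof of this proposition at all; it simply records that the proof is contained in \cite[Lemma~2.5]{F} (Freeman). Your write-up is therefore a self-contained, elementary substitute for that citation. Freeman's lemma is stated in somewhat greater generality (upper estimates with respect to a fixed model basis rather than self-domination), but the stabilization mechanism is essentially the same diagonalization you carry out here: iteratively extract finite witnesses to ever-larger failures and splice them into a single subsequence. What your presentation buys is that a reader need not chase the reference; what Freeman's formulation buys is a slightly broader statement applicable beyond the self-dominating case.
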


\section{Main results}\label{sec:4}

Let us begin with the following definition. 

\begin{dfn} Let $k\in\mathbb{N}$. We say that a $\co$-sequence $(x_n)$ in a Banach space $X$ has the $(N_k1)$ property, it there exists a sequence $\alpha=(\alpha_n)$ in $[0,1)$ converging to $1$ such that 
\begin{equation}\label{eqn:nk1p}
\Bigg\| \sum_{n=k}^\infty \alpha_n a_n x_n\Bigg\| \leq \Bigg\|\sum_{n=1}^\infty a_n x_n\Bigg\|
\end{equation}
for all sequence of scalars $(a_n)_{n=1}^\infty\in \co$. 
\end{dfn}

We also say that a Banach space $(X,\|\cdot\|)$ has the $(N_k1)$ property if it contains some $\co$-sequence enjoying the ($N_k$1) property for some $k\in\mathbb{N}$. 

\begin{remark}
Note that $(N_11)$ property coincides with the $(N1)$ property introduced by \'Alvaro, Cembranos and Mendoza \cite[Definition 1]{ACM}.
\end{remark}

\begin{thm} Every Banach space space with the $(N_k1)$ property fails the fixed point property for affine nonexpansive maps. 
\end{thm}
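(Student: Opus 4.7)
The plan is to exhibit an explicit affine nonexpansive self-map, of a closed bounded convex subset of $X$, with no fixed point; the construction imitates the classical right-shift that witnesses the failure of the FPP in $\co$, with the weights $(\alpha_n)$ produced by the $(N_k1)$ property being exactly what keeps the map nonexpansive rather than only Lipschitz.

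Fix a $\co$-sequence $(x_n)\subset X$ and weights $(\alpha_n)\subset[0,1)$ with $\alpha_n\uparrow 1$ witnessing the inequality \eqref{eqn:nk1p}, and set
\[
C=\Bigl\{x\in X : x=\sum_{n=1}^\infty a_n x_n \text{ with } 0\le a_n\le 1 \text{ for every } n\ge 1\Bigr\}.
\]
Equivalence of $(x_n)$ with the unit basis of $\co$ forces the coefficients of every element of $C$ to tend to $0$, so each series converges, and continuity of the coefficient functionals on $[x_n]$ then shows $C$ is closed in $X$. It is clearly nonempty, bounded, and convex.

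Next, define $T\colon C\to C$ by
\[
T\Bigl(\sum_{n=1}^\infty a_n x_n\Bigr)=\sum_{n=1}^{k-1} x_n+\sum_{n=k}^\infty\bigl[(1-\alpha_n)+\alpha_n a_n\bigr] x_n.
\]
For $n<k$ the coefficient of $T(x)$ equals $1$, while for $n\ge k$ it equals $(1-\alpha_n)+\alpha_n a_n\in[1-\alpha_n,1]\subset[0,1]$ and tends to $0$ as $n\to\infty$ (since $\alpha_n\to 1$ and $a_n\to 0$), so $T(x)\in C$. Affineness is immediate. The crucial point is nonexpansiveness: for $x=\sum a_n x_n$ and $y=\sum b_n x_n$ in $C$ the constant parts of $T(x)$ and $T(y)$ cancel, giving
\[
T(x)-T(y)=\sum_{n=k}^\infty \alpha_n(a_n-b_n)\,x_n,
\]
and since $(a_n-b_n)\in\co$, inequality \eqref{eqn:nk1p} applies directly and yields $\|T(x)-T(y)\|\le\|x-y\|$.

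Finally, any fixed point $x=\sum a_n x_n\in C$ of $T$ must satisfy $a_n=1$ for $n<k$ and $a_n(1-\alpha_n)=1-\alpha_n$ for $n\ge k$; since $\alpha_n<1$, this forces $a_n=1$ for every $n$, contradicting $a_n\to 0$. Hence $T$ has no fixed point in $C$, and $X$ fails the FPP for affine nonexpansive maps. The main conceptual hurdle is guessing the right form of $T$; once it is written down, the only step with real content is the nonexpansiveness bound, which is precisely where \eqref{eqn:nk1p} is invoked.
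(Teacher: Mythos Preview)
Your proof is correct and follows essentially the same approach as the paper: the set $C$ is exactly the set $K$ from \cite[Theorem~2]{ACM} to which the paper refers, and your map differs from the paper's map $T\bigl(\sum t_n x_n\bigr)=\sum_{n\ge k}\alpha_n t_n x_n+\sum_{n\ge k}(1-\alpha_n)x_n$ only by the constant vector $\sum_{n=1}^{k-1}x_n$, which is immaterial for affineness, nonexpansiveness, and the fixed-point analysis. Your write-up is in fact more self-contained than the paper's, which simply asserts that $T$ is affine, nonexpansive, and fixed-point free.
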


\begin{proof} Let $K$ be the set considered in \cite[Theorem 2]{ACM} and define the mapping $T\colon K\to K$ by $T(\sum_{n=1}^\infty t_n x_n) = \sum_{n=k}^\infty \alpha_n t_n x_n  + \sum_{n=k}^\infty \beta_n x_n$, where $\beta_n = 1-\alpha_n$, $n\in\mathbb{N}$, and $(\alpha_n)$ is as in (\ref{eqn:nk1p}). Then $T$ is nonexpansive, affine and fixed point free. 
\end{proof}

In virtue of this we can consider the following weak notion of premonotonicity. 

\begin{dfn} Let $X$ be a Banach space. We say that a basic sequence $(x_n)$ in $X$ is $\mathfrak{t}$-premonotone if there exists an integer $k\geq 2$ so that $\| Q_{n}\|=1$ for all $n\geq k$, where $Q_n x = \sum_{i=n}^\infty a_i x_i$ for any $x= \sum_{i=1}^\infty a_i x_i\in [x_n]$. 
\end{dfn}

It is clear that every pre-monotone basic sequence is $\mathfrak{t}$-premonotone. 

\begin{prop}\label{prop:A} Let $X$ be a Banach space. Assume that $(x_n)$ is a $\mathfrak{t}$-premonotone basic sequence in $X$. Then there exists an integer $k\geq 2$ so that $(x_n)$ has the $(N_k1)$ property. 
\end{prop}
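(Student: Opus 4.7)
My plan is to exploit the tail-projection bound $\|Q_n\|=1$ for $n\geq k$ through a convex-combination representation obtained by Abel summation. First I would pick any sequence $(\alpha_n)_{n\geq k}\subset [0,1)$ that strictly increases to $1$ (for instance $\alpha_n = 1 - 1/n$), and then set $\alpha_{k-1}:=0$ together with the positive increments
$$
\gamma_m := \alpha_m - \alpha_{m-1}\geq 0\quad (m\geq k).
$$
By a telescoping argument $\sum_{m=k}^{\infty}\gamma_m = \lim_{n\to\infty}\alpha_n = 1$.

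The second step is the key identity: for every $x=\sum_{i=1}^{\infty} a_i x_i\in [x_n]$,
$$
\sum_{n=k}^{\infty}\alpha_n a_n x_n \;=\; \sum_{m=k}^{\infty}\gamma_m\, Q_m(x).
$$
I would prove this by interchanging the order of summation: formally, $\sum_{m=k}^{\infty}\gamma_m\sum_{i=m}^{\infty}a_i x_i = \sum_{i=k}^{\infty}a_i x_i\sum_{m=k}^{i}\gamma_m = \sum_{i=k}^{\infty}\alpha_i a_i x_i$. The interchange is justified because $\|Q_m x\|\leq \|x\|$ (by $\mathfrak{t}$-premonotonicity) and $\sum_{m}\gamma_m = 1$, so the outer series converges \emph{absolutely} in norm, and the partial sums on both sides of the identity match in $[x_n]$.

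Once the identity is in hand, the inequality \eqref{eqn:nk1p} drops out from the triangle inequality combined with the hypothesis $\|Q_m\|=1$ for every $m\geq k$:
$$
\Bigg\| \sum_{n=k}^{\infty}\alpha_n a_n x_n \Bigg\| \;\leq\; \sum_{m=k}^{\infty}\gamma_m \|Q_m(x)\| \;\leq\; \Bigg(\sum_{m=k}^{\infty}\gamma_m\Bigg)\|x\| \;=\; \Bigg\|\sum_{n=1}^{\infty}a_n x_n\Bigg\|.
$$
Since $\alpha_n\in[0,1)$ and $\alpha_n\to 1$, this is exactly the $(N_k1)$ property.

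I do not expect a genuine obstacle here; the entire content is an application of Abel summation, turning a multiplier sequence with $\alpha_n\nearrow 1$ into a probability measure on the tail projections $\{Q_m\}_{m\geq k}$. The only careful point is verifying that the Fubini-style rearrangement converges in norm, which is immediate from $\sum_m \gamma_m<\infty$ and the uniform bound $\|Q_m x\|\leq \|x\|$ supplied by the $\mathfrak{t}$-premonotone hypothesis.
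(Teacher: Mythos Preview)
Your argument is correct and is essentially the same Abel-summation identity the paper uses: writing $\sum_{n\ge k}\alpha_n a_n x_n$ as $\alpha_k Q_k(x)+\sum_{i\ge k}(\alpha_{i+1}-\alpha_i)Q_{i+1}(x)$ (the paper records this for finite sums with the tail projections $R_i=Q_{i+1}$), and then applying $\|Q_m\|=1$ for $m\ge k$ together with $\sum\gamma_m=1$. The only cosmetic difference is that you work directly with the infinite series and justify the rearrangement, whereas the paper states the finite identity and leaves the passage to the limit implicit.
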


\begin{proof}  This follows directly from the $\mathfrak{t}$-premonotonicity assumption and the fact that for any scalars $(a_i)_{i=1}^m\subset\mathbb{R}$ we may write
\[
\sum_{n=k}^m \alpha_n a_n x_n= \alpha_k R_{k-1}\Bigg( \sum_{i=1}^m a_i x_i\Bigg) + \sum_{i=k}^{m-1}\big( \alpha_{i+1} - \alpha_i\big) R_i \Bigg( \sum_{n=1}^m a_n x_n\Bigg).
\]
\end{proof}

\begin{cor}\label{cor:6sec3} (i) If $(x_n)$ is a $\mathfrak{t}$-premonotone $\co$-sequence in a Banach space $X$, then it has the $(N_k1)$ property. (ii) Every Banach space containing a $\mathfrak{t}$-premonotone $\co$-sequence fails the fixed point property for affine nonexpansive mappings. 
\end{cor}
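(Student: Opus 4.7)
The plan is to piece together Proposition \ref{prop:A} with the preceding theorem, since both parts of the corollary are essentially book-keeping around those two results.

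For (i), I would fix an arbitrary sequence $(\alpha_n)\subset [0,1)$ with $\alpha_n\uparrow 1$, and let $k\geq 2$ be the integer supplied by the $\mathfrak{t}$-premonotonicity of $(x_n)$, so that $\|Q_n\|=1$ (equivalently $\|R_{n-1}\|\le 1$) for every $n\geq k$. For any $(a_n)\in\co$ and $m\geq k$, the Abel summation identity written out in the proof of Proposition \ref{prop:A} gives
\[
\sum_{n=k}^m \alpha_n a_n x_n \;=\; \alpha_k R_{k-1}(S_m) + \sum_{i=k}^{m-1}(\alpha_{i+1}-\alpha_i)\,R_i(S_m), \qquad S_m := \sum_{i=1}^m a_i x_i,
\]
and the triangle inequality combined with the tail-projection bounds $\|R_i(S_m)\|\le\|S_m\|$ (valid for $i\geq k-1$) collapses the estimate to
\[
\Bigl\|\sum_{n=k}^m \alpha_n a_n x_n\Bigr\| \;\le\; \Bigl(\alpha_k + \sum_{i=k}^{m-1}(\alpha_{i+1}-\alpha_i)\Bigr)\|S_m\| \;=\; \alpha_m\|S_m\| \;\le\; \|S_m\|.
\]
Passing to the limit $m\to\infty$ yields (\ref{eqn:nk1p}); both sides converge because $(a_n),(\alpha_n a_n)\in\co$ and $(x_n)$ is $\co$-equivalent, so the partial sums $\sum b_n x_n$ converge in $X$ whenever $(b_n)\in\co$. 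This is precisely where the hypothesis that $(x_n)$ is a $\co$-sequence, rather than merely a basic sequence, enters.

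Part (ii) is then automatic: combine (i) with the preceding theorem to produce an affine nonexpansive, fixed-point free self-map of the \'Alvaro--Cembranos--Mendoza set $K\subset X$ from \cite[Theorem 2]{ACM} associated with the $\co$-sequence $(x_n)$. Since the $(N_k1)$ property is inherited by $X$ from any $\co$-sequence it contains, Theorem 3.3 applies directly.

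The principal content of the argument---the Abel summation estimate and the construction of $K$---is already packaged in Proposition \ref{prop:A} and the preceding theorem respectively, so beyond the convergence justification just mentioned I anticipate no genuine obstacle. The only conceptual point worth flagging is that the integer $k$ produced in (i) is exactly the one from the $\mathfrak{t}$-premonotonicity hypothesis, so no further optimization of $k$ is needed.
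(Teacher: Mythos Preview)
Your proposal is correct and follows exactly the route the paper intends: part (i) is Proposition~\ref{prop:A} (the Abel summation identity together with $\|R_i\|\le 1$ for $i\ge k-1$), with the $\co$-sequence hypothesis used only to pass from finite sums to the limit in~(\ref{eqn:nk1p}), and part (ii) is then immediate from Theorem~3.3. The paper gives no separate proof of the corollary precisely because it is this direct combination, so your write-up simply makes the implicit steps explicit.
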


\begin{remark} We do not know whether there is a $\co$-sequence that does not satisfy $(N1)$ property but does satisfy $( N_k1)$ for some $k\geq 2$. However the renorming of $\co$ given by
\[
\| x\|= \sup_{i\in\mathbb{N}}|a_i - \frac{1}{3}a_1| \vee \sup_{i\geq 3}|a_i|
\]
satisfies $\| Q_m x\|=1$ for all $m\geq 3$ and $4/3\leq \|Q_2\|\leq 3/2$. 
\end{remark}

We are heading now towards getting sufficient conditions for the existence of pre-monotone $\co$-sequences. 

\begin{thm}\label{thm:B} Let $X$ be a Banach space having a pre-monotone Schauder basis $(e_i)$ that is not boundedly complete. Then $X$ contains a pre-monotone $\co$-sequence provided that one of the following conditions is verified:
\begin{itemize}
\item[(i)] The basis is unconditional.
\item[(ii)] The basis is spreading and $X$ does not embed into a space with an unconditional basis.
\item[(iii)] The basis is shrinking and is such that each of its strongly bounded block basic sequences dominate its subsequences. 
\end{itemize}
\end{thm}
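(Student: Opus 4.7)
The plan is to exploit the failure of bounded completeness to extract, in each case, a strongly bounded semi-normalized block basic sequence $(u_k)$ of $(e_i)$, and then to upgrade it (passing to a subsequence if needed) to one equivalent to the unit vector basis of $\co$. Since $(e_i)$ is pre-monotone, Proposition \ref{prop:1sec3} makes $(u_k)$ pre-monotone, and subsequences inherit this property; hence the final $\co$-sequence will automatically be pre-monotone.

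Common construction: pick scalars $(\lambda_i)$ with $M:=\sup_N\|\sum_{i=1}^N \lambda_i e_i\|<\infty$ but such that the series fails to converge. Writing $S_N=\sum_{i=1}^N\lambda_i e_i$, the sequence $(S_N)$ is bounded and non-Cauchy. A standard triangle-inequality selection then produces indices $p_0<p_1<\cdots$ and $\delta>0$ with $\|S_{p_{k+1}}-S_{p_k}\|\geq\delta$; then $u_k:=S_{p_{k+1}}-S_{p_k}=\sum_{i=p_k+1}^{p_{k+1}}\lambda_i e_i$ is a block basic sequence with $\delta\leq \|u_k\|\leq 2M$ and, by telescoping, $\sup_K\|\sum_{k=1}^K u_k\|\leq 2M$. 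By Proposition \ref{prop:1sec3}, $(u_k)$ is pre-monotone.

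In case (i), $(u_k)$ inherits $\mathcal{D}$-unconditionality from $(e_i)$, so $\sup_{K,\,\epsilon}\|\sum_{k=1}^K \epsilon_k u_k\|\leq 2\mathcal{D}M$ for every $\pm1$ sign pattern. The classical characterization of $\co$-basic sequences (uniformly bounded signed partial sums for a semi-normalized basic sequence) yields the equivalence of $(u_k)$ to the $\co$-basis. In case (iii), since $(e_i)$ is shrinking, $(u_k)$ is weakly null, and by hypothesis it dominates its subsequences. Proposition \ref{prop:2sec3} then furnishes $C\geq 1$ and a subsequence $(u'_k)$ such that each subsequence of $(u'_k)$ is $C$-dominated by $(u_k)$. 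For any finite $F\subset\mathbb{N}$, extending $(u'_i)_{i\in F}$ to an infinite subsequence of $(u'_k)$ and applying the domination bound gives $\|\sum_{i\in F}u'_i\|\leq C\|\sum_{i=1}^{|F|}u_i\|\leq 2CM$. Splitting into positive and negative parts extends this bound to arbitrary $\pm 1$ signs, and a convexity argument then gives $\|\sum a_i u'_i\|\leq 4CM\sup_i|a_i|$, i.e., the upper $\co$-estimate; the matching lower estimate comes automatically from the basis constant, so $(u'_k)$ is a $\co$-sequence.

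In case (ii), the non-embedding hypothesis forces $(e_i)$ to be conditional (otherwise $X$ embeds trivially into a space with an unconditional basis), so the summing functional $\mathfrak{s}$ is bounded. I would argue by a dichotomy on $(\mathfrak{s}(u_k))$: if some subsequence tends to zero, the spreading structure restricted to $\ker\mathfrak{s}$ produces a $\co$-copy; otherwise $|\mathfrak{s}(u_k)|$ is bounded below, in which case the spreading property together with strong boundedness forces $(u_k)$ to span a copy of $\co$. The non-embedding hypothesis is what rules out the intermediate regime where one could assemble a global unconditional decomposition of $X$. This case is the most delicate: making the dichotomy rigorous—carefully combining the spreading structure, the summing functional, and the structural consequences of non-embeddability into unconditional-basis spaces—constitutes the main obstacle in the proof.
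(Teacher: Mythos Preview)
Your arguments for (i) and (iii) are correct. Case (i) is essentially the paper's approach: the paper simply cites \cite[Theorem 3.3.2]{AK} for the $\co$-equivalence, which is exactly the content of your signed-partial-sums argument. For (iii) you take a slightly more direct route than the paper. The paper invokes Johnson's theorem (see \cite[Theorem 4.1]{O}) that a semi-normalized weakly null sequence all of whose subsequences have a strongly bounded further subsequence admits a $\co$-subsequence, after first using Proposition \ref{prop:2sec3} to guarantee this hypothesis. You instead use Proposition \ref{prop:2sec3} to obtain uniform domination and then derive the upper $\co$-estimate by hand via the finite-subset bound and convexity. Both routes work; yours avoids the black-box appeal to Johnson's result at the cost of a short explicit computation.

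Case (ii), however, has a genuine gap. Your proposed dichotomy on $(\mathfrak{s}(u_k))$ is not the operative mechanism, and the ``otherwise $|\mathfrak{s}(u_k)|$ is bounded below'' branch is in fact vacuous. The key missing ingredient is the notion of a \emph{strongly summing} spreading basis: by \cite[Proposition 6.5]{AMS}, a conditional spreading basis whose span does not embed into a space with an unconditional basis is strongly summing, meaning that whenever $\sup_N\big\|\sum_{i=1}^N \lambda_i e_i\big\|<\infty$ the scalar series $\sum_i\lambda_i$ converges. Applied to your coefficients $(\lambda_i)$, this forces $\sum_i\lambda_i$ to converge, so one can refine the block selection to obtain $\sum_k|\mathfrak{s}(u_k)|<\infty$ (summability, not merely $\mathfrak{s}(u_k)\to 0$ along a subsequence). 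Then \cite[Lemma 6.3]{AMS} shows that a semi-normalized block sequence of a conditional spreading basis with $\sum_i|\mathfrak{s}(x_i)|<\infty$ is equivalent to the unit basis of $\co$. Your appeal to ``the spreading structure restricted to $\ker\mathfrak{s}$'' and to the non-embedding hypothesis ``ruling out an intermediate regime'' does not capture this: the non-embedding assumption enters precisely to guarantee the strongly summing property, not to resolve a dichotomy.
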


\begin{proof} The proof of (i) is a direct consequence of Proposition \ref{prop:1sec3} and \cite[Theorem 3.3.2]{AK}. 

\vskip .04cm 
%%%%%%%%%%%%%%%%%%%%%%%%%%%%%%%%%%%%%%%%

\noindent (ii).  Since $(e_i)$ is not boundedly complete, there exists $x^{**}\in X^{**}$ so that the series $\sum_{i=1}^\infty x^{**}(e^*_i)e_i$ does not converge in norm. As $X$ does not embed into a space with an unconditional basis, this clearly implies that $(e_i)$ is conditional. In addition, it is also strongly summing (cf. \cite[Proposition 6.5]{AMS}). It follows in particular that $\sum_{i=1}^\infty x^{**}(e^*_i)$ is convergent. Thus we may choose a strictly increasing sequence $(n_i)_i\subset \mathbb{N}$ so that if $x_i= \sum_{j=n_{i-1}+1}^{n_i} x^{**}(e^*_j) e_j$, then $(x_i)$ is semi-normalized and $\sum_{i=1}^\infty | \mathfrak{s}(x_i)|<\infty$, where $\mathfrak{s}$ is the summing functional. By the proof of \cite[Lemma 6.3]{AMS}, $(x_i)$ is equivalent to the unit vector basis of $\co$. Since $(e_i)$ is pre-monotone, so is $(x_i)$ and the result follows. 

%%%%%%%%%%%%%%%%%

\vskip .04cm 

\noindent (iii). Since $(e_i)$ is shrinking and not boundedly complete, it has a block basic sequence $(x_n)$ which is semi-normalized, weakly null and strongly bounded. According to an unpublished result of Johnson (cf. \cite[Theorem 4.1]{O}), if every subsequence of a normalized weakly null sequence $(u_n)$ has a further subsequence which is strongly bounded, then there exists a subsequence of $(u_n)$ equivalent to the unit basis of $\co$. Let us stress that the same proof given in \cite{O} works for semi-normalized weakly null sequences. Let $C\geq 1$ and $(x'_n)$ be as in Proposition \ref{prop:2sec3}. Then $(x'_n)$ is semi-normalized and weakly null. In addition, every subsequence of any subsequence of $(x'_n)$ is $C$-dominated by $(x_n)$. Since $(x_n)$ is strongly bounded, by Johnson's result  $(x'_n)$ has a subsequence $(x''_n)$ equivalent to the unit basis of $\co$. Since $(e_i)$ is pre-monotone, so is $(x''_n)$ as desired.  
\end{proof}

Our next result connects the FPP to PCP. Let's recall that a closed, bounded convex set $C$ in a Banach space $X$ is said to have the {\it point of continuity property} (PCP) (resp. {\it convex point of continuity property} (CPCP)) if for every non-empty norm-closed (convex) subset $K$ of $C$, the identity mapping $I$ on $K$ is weak-to-norm continuous at some point of $K$. The space $X$ is said to have the PCP (resp. CPCP) if its closed unit ball $B_X$ has the PCP (resp. CPCP). More details on PCP are included in Remark \ref{rmk:PCP}. 

\begin{thm}\label{thm:D} Let $X$ be a Banach space with a $1$-suppression unconditional basis. Assume that $X$ fails to have the $(N_k1)$ property. Then $X$ has PCP. 
\end{thm}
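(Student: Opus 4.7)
The plan is to argue by contraposition: assuming $X$ fails PCP, I will produce a $c_0$-sequence in $X$ enjoying the $(N_k1)$ property. First, note that $1$-suppression unconditionality implies bimonotonicity, hence pre-monotonicity of $(e_i)$. By Proposition~\ref{prop:1sec3}, every block basic sequence of $(e_i)$ is then pre-monotone (hence $\mathfrak{t}$-premonotone), and Corollary~\ref{cor:6sec3}(i) guarantees that any such $c_0$-block basic sequence will automatically witness the $(N_k1)$ property. The task therefore reduces to producing at least one $c_0$-block basic sequence of $(e_i)$ under the failure of PCP.

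By the classical Bessaga-Pelczynski theorem \cite[Theorem~3.3.2]{AK}, an unconditional basis is boundedly complete precisely when $X$ contains no copy of $c_0$. In the non-boundedly-complete case, Theorem~\ref{thm:B}(i) already delivers a pre-monotone $c_0$-sequence. So the argument hinges on proving
\begin{equation*}
\text{$X$ has an unconditional basis and fails PCP} \;\Longrightarrow\; c_0 \text{ embeds into } X,
\end{equation*}
equivalently that $(e_i)$ is not boundedly complete.

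This implication is the main obstacle. One direction is immediate: PCP passes to closed subspaces and $c_0$ manifestly lacks PCP. For the reverse direction, I would exploit the Banach lattice structure induced on $X$ by the unconditional basis --- under it $X$ becomes an atomic $\sigma$-order complete Banach lattice whose norm is order continuous precisely when $c_0$ does not embed in $X$, and in this atomic order-continuous setting PCP is known to hold. A self-contained alternative is to start from a closed bounded set $C$ on which the identity is nowhere weak-to-norm continuous, extract from $C - C$ a $\delta$-separated, weakly null sequence, approximate it by a block basic sequence $(u_n)$ of $(e_i)$ via the standard sliding-hump principle, and then combine $1$-suppression unconditionality with a dyadic tree/bush structure extracted from $C$ to bound $\|\sum_i \epsilon_i u_i\|$ uniformly over sign choices $\epsilon_i \in \{-1,+1\}$. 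This uniform sign-control, a standard sufficient condition (via Bessaga-Pelczynski) for $c_0$-equivalence of a semi-normalized weakly null sequence, is where I expect most of the technical work to lie.
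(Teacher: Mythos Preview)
Your contrapositive reduction---find a pre-monotone $c_0$-block basic sequence of $(e_i)$, then invoke Corollary~\ref{cor:6sec3}---matches the paper exactly. Where you diverge is in how to produce that sequence from the failure of PCP.

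Your first route (boundedly complete unconditional basis $\Rightarrow$ $X$ is isomorphic to a separable dual $\Rightarrow$ RNP $\Rightarrow$ PCP, then contrapose and apply Theorem~\ref{thm:B}(i)) is correct and constitutes a genuinely different, shorter argument. Its cost is reliance on the RNP\,$\Rightarrow$\,PCP machinery and the dual-space representation of boundedly complete bases, neither of which the paper invokes.

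The paper instead builds the $c_0$-block sequence directly from a closed $K\subset B_X$ all of whose relatively weak neighborhoods have diameter $\geq\delta$, following Argyros--Odell--Rosenthal. The mechanism is simpler than the ``dyadic tree/bush plus uniform signed-sum bounds'' you sketch: one picks $y_1\in K$, then inductively $y_{n+1}\in K$ weakly close to $y_n$ (agreeing on the first $m_n$ coordinates up to a small error) with $\|y_{n+1}-y_n\|\geq\delta/2$. Setting $x_n=y_n-y_{n-1}$, the \emph{telescoping} identity $\sum_{i=1}^m x_i = y_m\in K$ gives $\|\sum_{i=1}^m x_i\|\leq 1$ for free---no bush, no sign control. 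The block approximations $z_n=P_{(m_{n-1},m_n]}x_n$ inherit $\sup_m\|\sum_{i=1}^m z_i\|<\infty$, and then a single Abel-summation step combined with $1$-suppression (so that $\|\sum_{s\leq k} z_{i_s}\|\leq\|\sum_{i=1}^m z_i\|$ for \emph{any} subset) yields $\|\sum a_i z_i\|\leq 3\|\sum z_i\|\max_i|a_i|$. Your second sketch is in the right neighborhood but overshoots: bounding $\|\sum\epsilon_i u_i\|$ over all signs is unnecessary once $1$-suppression is available, and the tree structure is replaced by the elementary observation that the partial sums stay in $K$.
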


\begin{proof} Suppose the thesis is false. Then the definition of PCP in conjunction with \cite[Lemma 2.1]{PA} yields a non-empty closed subset $K$ of $B_X$ so that, for some $\delta>0$, every weak-neighborhood of $K$ has diameter at least $\delta$. Let $(e_i)$ denote the basis of $X$. We will now mimic the arguments from \cite[Proposition 2.3]{AORos} to build a semi-normalized block basis of $(e_i)$ dominated by the unit basis of $\co$. For $n\in\mathbb{N}$, consider the natural projections $P_n$, $R_n$. Note that $\diam(K)\geq \delta$, so there is a point $y_1\in K$ with $\| y_1\| \geq \delta/2$. Pick $m_1\in\mathbb{N}$ so that $\| R_{m_1}y_1\|< \delta/2$ and define $z_1 = P_{m_1}y_1$. Let $M_1=\sup_{1\leq i\leq m_1}\| e_i\|$ and  
\[
V_1=\Big\{ x\in K\colon \sum_{j=1}^{m_1}| e^*_j( x -y_1 )| < \delta/2^{m_1+2}M_1\Big\}.
\]
Then $\diam(V_1)\geq \delta$ and hence there is $y_2\in V_1$ so that $\| y_2 - y_1\|\geq \delta/2$. Set $x_1= y_1$ and $x_2= y_2 - y_1$. Pick $m_2> m_1$ so that $\|R_{m_2} x_2\| < \delta/2^{3}$ and define $z_2 = P_{(m_1, m_2]} x_2$. It follows that $z_2\neq 0$ and $\| x_2 -z_2\|< \delta/2^2$. By induction, assume that $n\geq 1$, $m_{n+1}> m_n$, vectors $\{x_i\}_{i=1}^{n+1}$ and non-null block vectors $\{z_i\}_{i=1}^n$ have been already built so that $\sum_{i=1}^{n+1} x_i\in K$, $\| x_k\|\geq \delta/2$ and $\| x_k - z_k\|< \delta/2^k$ for all $k=1,\dots, n$. Put $M_{n+1}=\max_{1\leq j\leq m_{n+1}}\|e_j\|$ and $y_{n+1}= \sum_{i=1}^{n+1} x_i$. Set
\[
V_{n+1}=\Big\{ x\in K \colon \sum_{j=1}^{m_{n+1}} |e^*_j(x - y_{n+1} )| < \delta/2^{m_{n+1}+2}M_{n+1}\Big\}.
\]
Since $\diam(V_{n+1})\geq \delta$ we can find $y_{n+2}\in V_{n+1}$ so that $\| y_{n+2} - y_{n+1}\|\geq \delta/2$. Set $x_{n+2}=y_{n+2} - y_{n+1}$. Now pick $m_{n+2}> m_{n+1}$ so that $\| R_{m_{n+2}} x_{n+2}\| < \delta/ 2^{n+3}$. Define $z_{n+2}= P_{(m_{n+1}, m_{n+2}]} x_{n+2}$. Then $\| x_{n+2} - z_{n+2}\|\leq \delta/2^{n+2}$. This concludes the induction process. 

By construction, $(z_n)$ is a semi-normalized block basic sequence of $(e_i)$. Let us show that $(z_n)$ is dominated by the unit basis of $\co$. Fix any scalars $(a_i)_{i=1}^m$. Next pick a functional $\varphi\in S_{X^*}$ so that $\| \sum_{i=1}^m a_i z_i\|= \varphi( \sum_{i=1}^m a_i z_i)$. We can regroup terms and write $\sum_{i=1}^m a_iz_i= \sum_{k=1}^m a_{i_k} z_{i_k}$ with $a_{i_1}\geq a_{i_2}\geq \dots \geq a_{i_m}$. Consequently, 
\[
\begin{split}
\Bigg\|\sum_{i=1}^ma_i z_i\Bigg\| &= \sum_{k=1}^{m-1}( a_{i_k} - a_{i_{k+1}}) \varphi\Big(\sum_{s=1}^i z_{i_s}\Big) + a_{i_m} \varphi\Big( \sum_{s=1}^m z_{i_s}\Big)\\[1.2mm]
&\leq \sum_{k=1}^{m-1}(a_{i_k} - a_{i_{k+1}})\Bigg\| \sum_{s=1}^iz_{i_s}\Bigg\|+ |a_{i_m}|\Bigg\|\sum_{s=1}^m z_{i_s}\Bigg\|\\[1.2mm]
&\leq \Bigg(\sum_{k=1}^{m-1}(a_{i_k} - a_{i_{k+1}}) + |a_{i_m}|\Bigg)\Bigg\| \sum_{i=1}^mz_i\Bigg\|\leq 3\Bigg\| \sum_{i=1}^m z_i\Bigg\|\sup_{1\leq i\leq m}|a_i|,
\end{split}
\]
where in the second inequality we used that $(z_i)$ is $1$-suppression unconditional. Since
\[
\begin{split}
\Bigg\| \sum_{i=1}^m z_i\Bigg\|& \leq \sum_{i=1}^m \|z_i - x_i\| + \Bigg\| \sum_{i=1}^m x_i\Bigg\|\leq \delta + \sup_{x\in K}\| x\|,
\end{split}
\]
we deduce that $(z_n)$ is a pre-monotone $\co$-sequence. By Corollary \ref{cor:6sec3}, $X$ has the $(N1)$ property, contradicting the hypothesis of the theorem. 
\end{proof}

%--------------------------------------------------------------

Regarding the weak-FPP, we obtain the following  results.

\begin{thm}\label{thm:F} Let $X$ be a Banach space having a $\mathcal{D}$-unconditional basis with $\mathcal{D}<2$. Assume that $Z$ is a Banach space with a strongly bimonotone basis and $\dist_{BM}(X,Y)=1$ for some subspace $Y$ of $Z$. Then $X$ has the weak fixed point property. 
\end{thm}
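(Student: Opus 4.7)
The plan is to argue by contradiction using the standard Goebel--Karlovitz framework, and then exploit the Banach--Mazur distance one hypothesis to gain extra geometric control in the target space $Z$. Assume that $X$ fails the weak-FPP. Then there is a weakly compact convex set $C\subset X$ which is minimal invariant under some nonexpansive $T\colon C\to C$ without fixed points, and $d:=\diam(C)>0$. After a translation, we may suppose that $0\in C$ is the weak limit of an approximate fixed point sequence $(x_n)\subset C$; by the Goebel--Karlovitz lemma this sequence is diametral, i.e.\ $\lim_n\|x_n-x\|=d$ for every $x\in C$.

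Next I would use the hypothesis $\dist_{BM}(X,Y)=1$ to select, for each $\epsilon>0$, an isomorphism $U_\epsilon\colon X\to Y\subset Z$ with $\|U_\epsilon\|=1$ and $\|U_\epsilon^{-1}\|\le 1+\epsilon$. Pushing everything forward: $C_\epsilon:=U_\epsilon(C)\subset Z$ is weakly compact convex with $\diam(C_\epsilon)\le d$, the map $T_\epsilon:=U_\epsilon T U_\epsilon^{-1}$ is $(1+\epsilon)$-Lipschitz on $C_\epsilon$, and $(U_\epsilon x_n)_n$ is a weakly null approximate fixed point sequence for $T_\epsilon$ satisfying $\liminf_n\|U_\epsilon x_n-y\|_Z\ge d/(1+\epsilon)$ for every $y\in C_\epsilon$. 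The strong bimonotonicity of the basis $(f_i)$ of $Z$ gives norm-one interval projections $P_{[m,n]}$ and their complements $I-P_{[m,n]}$ throughout $Z$, which will be the source of the extra control.

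Then I would extract from $(U_\epsilon x_n)$, via a standard Bessaga--Pe\l czy\'nski gliding-hump perturbation with respect to $(f_i)$, a subsequence $(u_n)$ lying within $\epsilon$ of a block basic sequence $(v_n)$ of $(f_i)$. Pulling back by $U_\epsilon^{-1}$ yields a sequence in $X$ that is (up to an $\epsilon$-perturbation and a distortion $1+\epsilon$) a block basic sequence relative to the $\mathcal{D}$-unconditional basis of $X$, and inherits both the approximate fixed point and the diametral properties. The argument now combines three pieces: (a) the $\mathcal{D}$-unconditionality of the basis of $X$, permitting sign flips of coefficients at cost $\mathcal{D}$; (b) the norm-one tail and interval projections in $Z$ coming from strong bimonotonicity; and (c) the distortion bound $1+\epsilon$. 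Applying the diametral identity to a well-chosen convex combination such as $\tfrac12(x_n+x_m)$, together with a Lin-type symmetrisation argument in $Z$, should yield an inequality of the form $2d\le \mathcal{D}\cdot d\cdot(1+\epsilon)^c$ for some fixed constant $c$; letting $\epsilon\to 0$ then contradicts $\mathcal{D}<2$.

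The main obstacle is the last step: quantifying the interaction between the diametral weakly null sequence and its sign-flipped, block-approximated counterpart so that the unconditional constant $\mathcal{D}$ appears as the only multiplicative factor on the right-hand side, while the cross terms (which in Lin's original argument forced the stricter bound $\tfrac12(\sqrt{33}-3)$, and in \cite{Bar} the bound $\sqrt{6}-1$) are absorbed by the norm-one interval projections inherited from $Z$. Executing this estimate cleanly while tracking the $\epsilon$-errors without degrading the sharp threshold $2$ is the core technical point of the proof.
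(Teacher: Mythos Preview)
Your outline follows the correct broad strategy (contradiction via a minimal invariant set, push into $Z$, exploit strong bimonotonicity together with the $\mathcal{D}$-unconditionality, and let $\epsilon\to 0$), but it is missing the one device that actually makes the argument close, and as a result the ``main obstacle'' you flag at the end is genuine and unresolved.

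The missing idea is the auxiliary norm on $Z$. In the paper one does \emph{not} work with the original norm of $Z$ after transporting $T$ there; doing so, as you note, only gives a $(1+\epsilon)$-Lipschitz map $T_\epsilon$, and then neither Goebel--Karlovitz nor Lin's lemma applies cleanly in $Z$. Instead, having chosen $J\colon X\to Y$ with $\|J^{-1}\|\le 1$ and $\|J\|\le 1+\epsilon$, one introduces on $Z$ the norm
\[
|z|=\inf\big\{\|x\|_X+\|z-Jx\|_Z : x\in X\big\}.
\]
This makes $J$ an \emph{isometry} from $(X,\|\cdot\|_X)$ into $(Z,|\cdot|)$, so $T_J:=J T J^{-1}$ is genuinely nonexpansive on $K_J:=J(K)$ and the full Goebel--Karlovitz/Lin machinery is available in $(Z,|\cdot|)$. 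At the same time $|\cdot|$ is $(1+\epsilon)$-equivalent to $\|\cdot\|_Z$, so the strong bimonotonicity constant of the basis of $Z$ with respect to $|\cdot|$ is at most $1+\epsilon$. This is precisely what lets the interval projections and their complements be used with constant $1+\epsilon$ while keeping $T$ nonexpansive---the tension you could not resolve by shuttling back and forth between $X$ and $Z$.

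The second point where your sketch diverges is the endgame. The paper does not aim at an inequality of the shape $2d\le \mathcal{D}\,d\,(1+\epsilon)^c$. Rather, one passes to the ultrapower $[Z]$, sets $y_n=Jx_n$, glides so that $y_{n_i}\approx P_{F_i}y_{n_i}$ for disjoint finite intervals $F_i$, and considers
\[
[\mathcal{M}]=\Big\{[v_i]\in[K_J]:\ \exists\,x\in K,\ |[v_i]-Jx|\le\tfrac{\mathcal{D}+\epsilon}{2},\ |[v_i]-[y_{n_i}]|\vee|[v_i]-[y_{n_{i+1}}]|\le\tfrac12\Big\}.
\]
The $(\mathcal{D}+\epsilon)$-unconditionality of $(y_n)$ is what shows $[\mathcal{M}]$ is nonempty and $[T_J]$-invariant; Lin's lemma then forces $\sup\{|[v_i]|:[v_i]\in[\mathcal{M}]\}=1$. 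But the identity $2[v_i]\approx (P_{F_i}+P_{F_{i+1}})([v_i]-Jx)+(I-P_{F_i})([v_i]-[y_{n_i}])+(I-P_{F_{i+1}})([v_i]-[y_{n_{i+1}}])$ together with the $(1+\epsilon)$-strong-bimonotonicity bound gives
\[
|[v_i]|\le \tfrac{1+\epsilon}{2}\Big(\tfrac{\mathcal{D}+\epsilon}{2}+1\Big)<1
\]
for small $\epsilon$, since $\mathcal{D}<2$. That is the contradiction. Your proposal never isolates this three-term decomposition, and without the renorming you cannot bound all three projection norms by $1+\epsilon$ while keeping $T$ nonexpansive.
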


\begin{proof}
Let $(e_i)$ denote the basis of $X$. Assume for a contradiction that $X$ fails the weak-FPP. Let $K$ be a weakly compact convex subset of $X$ which is minimal-invariant for a fixed point free nonexpansive mapping $T\colon K\to K$. Take $(x_n)$ an approximate fixed point sequence of $T$. By passing to a subsequence and taking suitable scalings, we may assume that $\diam(K)=1$, $0\in K$ and $(x_n)$ is semi-normalized and weakly null. Let $\varepsilon>0$ be fixed to be chosen later. Since $(e_i)$ is $
\mathcal{D}$-unconditional, after passing to a further subsequence, we may also assume that $(x_n)$ is $(\mathcal{D} +\varepsilon)$-unconditional. 

By assumption there is a Banach space $Z$ having a strongly bimonotone basis such that $\dist_{BM}(X,Y)=1$ for some subspace $Y$ of $Z$. Let $J\colon X\to Y$ be an isomorphism satisfying $\|J^{-1}\|\leq 1$ and $\|J\|\leq 1 +\varepsilon$. Following \cite{Bar} we now define a new norm on $Z$ by
\[
|z|=\inf\big\{ \|x\|_X + \|z- Jx\|_Z\colon x\in X\big\}.
\]
It follows that $J$ is an isometry from $X$ into $(Z, |\cdot|)$. Moreover, one has
\begin{equation}\label{eqn:1sec9}
\frac{1}{1 +\varepsilon}\|z\|_Z\leq |z|\leq \|z\|_Z\quad\text{for all }z\in Z.
\end{equation}

Let $(z_i)$ be the basis of $Z$ and let $(P_i)$ denote its canonical basis projections. Inequality (\ref{eqn:1sec9}) readily shows that $\mathcal{K}^{|\cdot|}_{sb}$, the  strong bimonotonicity projection constant of $(z_i)$ with respect to the norm $|\cdot|$, is not larger than $1+\varepsilon$. For $n\in \mathbb{N}$ let $y_n=J(x_n)$. Since $J$ is an isometry, $(y_n)$ is weakly null and $(\mathcal{D}+\varepsilon)$-unconditional in $(Z,|\cdot|)$. On the other hand, applying the gliding hump method, we obtain a subsequence $(y_{n_i})$ of $(y_n)$ and an increasing sequence of consecutive intervals $(F_i)$ of integers in $\mathbb{N}$ such that 
\begin{equation}\label{eqn:2sec9}
\lim_{i\to\infty}| y_{n_i} - P_{F_i} y_{n_i}|=0.
\end{equation}

Let now $[X]$ and $[Z]$ denote the ultrapowers of $X$ and $Z$ respectively. Define $K_J:=J(K)$ and $T_J\colon K_J\to K_J$ by $T_J(Jx)= J(Tx)$, for all $x\in K$. Next  set 
\[
[\mathcal{M}]=\left\{  [v_i] \in[K_J] \colon\, \begin{matrix}&\exists\, x\in K\text{ such that }\, \big| [v_i] - Jx\big|\leq(\mathcal{D}+\varepsilon)/2, \text{ and }\\
&| [v_i] -[y_{n_i}]|\vee \big| [v_i] -[y_{n_{i+1}}]\big|\leq 1/2.
\end{matrix}\right\}.
\]
Clearly $[\mathcal{M}]$ is a nonempty, closed convex subset of $[K_J]$ which is $[T_J]$-invariant. By Lin's lemma \cite{Lin1}, $\sup\{ \big| [v_i]\big| \colon [v_i] \in [\mathcal{M}]\}=1$. Recall that $\diam(K)=1$ and $J$ is an isometry. As it turns out however, for $\varepsilon$ small enough, one has
\[
\begin{split}
\big| [v_i]\big|&\leq \frac{1}{2}\Big( \big|[P_{F_i}] + [P_{F_{i+1}}]\big| \big| [v_i] - Jx\big| + \big|[I] - [P_{F_i}]\big| \big| [v_i] - [ y_{n_i}]\big| + \big| [ I] - [P_{F_{i+1}}]\big| \big| [v_i] - [y_{n_{i+1}}]\big|\Big)\\[1.2mm]
&\leq \frac{1}{2}\Big( (1 +\varepsilon)\frac{\mathcal{D} +\varepsilon}{2} + \frac{1+\varepsilon}{2}+ \frac{1+\varepsilon}{2}\Big)= \frac{1+\varepsilon}{2}\Big(\frac{\mathcal{D} +\varepsilon}{2} +1\Big)<1.
\end{split}
\]
This contradiction proves the theorem. 
\end{proof}

\begin{thm}\label{thm:G} Let $X$ be a Banach space with a Schauder basis $(e_i)$. Assume that every block basis sequence has a subsequence equivalent to the basis. Then $X$ has the weak fixed point property in each of the following two situations:
\begin{itemize}
\item[(i)] $(e_i)$ is a $1$-suppression unconditional basis. 
\item[(ii)] $(e_i)$ is  a $1$-spreading basis. 
\end{itemize}
\end{thm}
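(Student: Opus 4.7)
The plan is to argue by contradiction within the ultrapower and Lin's-lemma framework developed in Theorem~\ref{thm:F}. Suppose $X$ fails the weak-FPP and fix a weakly compact convex $K\subset X$ that is minimal invariant under some fixed-point-free nonexpansive $T:K\to K$. After a translation and rescaling we may assume $\diam(K)=1$ and $0\in K$; the Goebel--Karlovitz lemma then supplies a semi-normalized, weakly null approximate fixed point sequence $(x_n)\subset K$ with $\|x_n-y\|\to 1$ for every $y\in K$. Applying the standard gliding hump to $(x_n)$ against $(e_i)$ yields, after passing to a subsequence, a semi-normalized block basic sequence $(u_n)$ of $(e_i)$ with $\|x_n-u_n\|\to 0$; since $(x_n)$ is weakly null, so is $(u_n)$. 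The Rosenthal-type hypothesis then extracts a further subsequence, still denoted $(u_n)$, that is $C$-equivalent to $(e_i)$ for some $C\ge 1$.

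For case (i), $(u_n)$ inherits from $(e_i)$ the $1$-suppression unconditional property, so in particular $(u_n)$ is strongly bimonotone and all its canonical interval projections have norm exactly $1$. Replicating the ultrapower construction of Theorem~\ref{thm:F} in $[X]$, the induced nonexpansive map $[T]:[K]\to[K]$ fixes $[x_n]$, and the set
\[
\mathcal{M}=\Big\{[v_n]\in[K]:\exists\,x\in K\text{ with }\|[v_n]-x\|\le\tfrac{1}{2}\text{ and }\|[v_n]-[x_{2n}]\|\vee\|[v_n]-[x_{2n+1}]\|\le\tfrac{1}{2}\Big\}
\]
is a nonempty, closed, convex and $[T]$-invariant subset of $[K]$, so Lin's lemma forces $\sup_{\mathcal{M}}\|\cdot\|=1$. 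Conversely, splitting each $[v_n]\in\mathcal{M}$ along the three consecutive blocks of $(e_i)$ determined by the supports of $u_{2n}$ and $u_{2n+1}$, and exploiting the unit-norm projections together with the vanishing errors $\|x_n-u_n\|\to 0$, one bounds the three pieces of $[v_n]$ against $\|[v_n]-x\|$, $\|[v_n]-[x_{2n}]\|$ and $\|[v_n]-[x_{2n+1}]\|$, each at most $1/2$, to conclude that $\|[v_n]\|<1$, contradicting Lin's lemma. Case (ii) is reduced to case (i): since $(u_n)$ is semi-normalized, weakly null, and $C$-equivalent to the $1$-spreading basis $(e_i)$, passing to a further subsequence admits a spreading model which is $1$-spreading, semi-normalized, weakly null and $1$-equivalent to $(e_i)$; Proposition~11.3.6 of~\cite{AK} recalled in Section~\ref{sec:3} then upgrades this spreading model to $1$-suppression unconditional, and transferring this asymptotic structure back to $(u_n)$ inside the ultrapower reduces the situation to case (i).

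The main obstacle is the three-block ultrapower estimate in case (i): one must organise the splitting of $[v_n]$ so that the $1$-suppression unconditional constant delivers a bound strictly below~$1$, while ensuring that the $C$-equivalence with $(e_i)$ and the vanishing gliding-hump errors do not push the estimate up to~$1$. A secondary subtlety, for case (ii), is that $(u_n)$ itself need not be $1$-spreading, so one must pass to a spreading model (or equivalently, a further ultrapower reindexing) before Proposition~11.3.6 becomes applicable.
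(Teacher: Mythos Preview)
Your approach in case~(i) has a genuine gap, and it is precisely the obstruction the paper flags as the open problem~($\mathcal{Q}2$). The three-block ultrapower estimate you sketch yields
\[
\big\|[v_n]\big\|\le \tfrac12\Big(\|P_{F_{2n}\cup F_{2n+1}}\|\cdot\|[v_n]-x\|+\|I-P_{F_{2n}}\|\cdot\tfrac12+\|I-P_{F_{2n+1}}\|\cdot\tfrac12\Big),
\]
and $1$-suppression unconditionality makes each projection norm equal to~$1$. The problem is the bound on $\|[v_n]-x\|$. With your choice $\|[v_n]-x\|\le\tfrac12$ the set $\mathcal{M}$ need not be nonempty: the natural witness $\tfrac12([x_{2n}]+[x_{2n+1}])$ satisfies, in the ultrapower, $\big\|\tfrac12(u_{2n}+u_{2n+1})\big\|$, and for disjointly supported blocks in a merely $1$-suppression unconditional space this can approach~$1$, not~$\tfrac12$ (the basis is only $2$-unconditional, so $\|u_{2n}+u_{2n+1}\|\le 2\|u_{2n}-u_{2n+1}\|\to 2$ is the best a priori bound). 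If you relax the first constraint to $\|[v_n]-x\|\le 1$ to recover nonemptiness, the estimate becomes $\tfrac12(1+\tfrac12+\tfrac12)=1$, and Lin's lemma is not contradicted. Neither the $C$-equivalence of $(u_n)$ with $(e_i)$ nor the gliding-hump errors are the issue; the estimate is already saturated in the ideal case. This is exactly why Lin's original theorem needs $\mathcal D<\tfrac12(\sqrt{33}-3)$ and why $(\mathcal Q2)$ is open.

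The paper's argument is entirely different and does not attempt an ultrapower estimate. It first observes (via~\cite{FPR}) that the block-basis hypothesis forces $(e_i)$ to be spreading, and moreover that every spreading model of $X$ is equivalent to $(e_i)$. It then invokes a result of Garc\'ia-Falset: failure of the weak-FPP produces a weakly null sequence generating an $\ell_1$-spreading model. Combining the two, $(e_i)$ would be equivalent to the unit basis of $\ell_1$, which is ruled out (one first disposes of the case $\ell_1\hookrightarrow X$ via Schur/$c_0$ considerations). Case~(ii) is reduced to case~(i) not through spreading models of $(u_n)$ as you propose, but directly on the basis: by Rosenthal's theorem $(e_i)$ is weak Cauchy, and if not already weakly null one passes to the difference basis $d_n=e_{2n-1}-e_{2n}$, which is weakly null and $1$-spreading, hence $1$-suppression unconditional.
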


\begin{proof}
It is not hard to see (after applying an infinitary analog of Ramsey's theorem) that the block basis assumption implies that $(e_i)$ is spreading (cf. \cite[p.397]{FPR}). 

\vskip .05cm 
\noindent (i).  By James's theorem either $X$ is reflexive or contains a subspace isomorphic to $\co$ or $\ell_1$. Since Schur spaces have weak-FPP, by the assumption we may assume that $X$ does not contain copies of $\ell_1$. Suppose by contradiction that $X$ fails the weak-FPP. Then $X$ contains a semi-normalized weakly null basic sequence $(x_n)$ which generates an $\ell_1$-spreading model (cf. proof of \cite[Theorem (3)]{GFal}). Since no spreading model of $\co$ is isomorphic to $\ell_1$, $X$ is reflexive. Hence $X$ is reflexive and has an $\ell_1$-spreading model generated by $(x_n)$. This fact (i.e., $(x_n)$ generates an $\ell_1$-spreading model) remains true under any equivalent renorming. It turns out that (up to renorming) any spreading model of $X$ is equivalent to $(e_i)$ (cf. \cite[Lemma 3]{FPR}). But this, however, implies $X$ is isomorphic to $\ell_1$, contradiction.  

\vskip .05cm 
\noindent(ii). By Rosenthal's $\ell_1$-theorem, either $(e_i)$ is equivalent to the unit basis of $\ell_1$ or $(e_i)$ is weak Cauchy. Assume without loss of generality that $(e_i)$ is weak Cauchy. Now, we analyse two cases. If $(e_i)$ is weakly null, then $(e_i)$ is $1$-suppression unconditional and by the previous result $X$ has the weak-FPP. Assume now that $(e_i)$ is not weakly null. For $n\in\mathbb{N}$, set $d_n= e_{2n-1} - e_{2n}$. Then $(d_n)$ is a weakly null and $1$-spreading (cf. \cite[Proposition 11.3.6]{AK}). Since $(d_n)$ defines a basis for $X$ with such properties, by the previous case, the result follows.   
\end{proof}

%%%%%%%%%%%%%%%%%%%%%%%%%%%%%%%%%%%

\medskip 

\section{Consequences and final considerations}\label{sec:11}
As far as we know, the results presented in this work are not available in the published literature. Although the tools used in the demonstrations are not new, we believe that their use in the income statements not only represents something relatively new, but also illustrates how difficult and demanding the problems considered here are. It is important to emphasize that it has been a recurrent strategy in many works in metric fixed point theory, the use of non-elementary techniques (albeit well established) from Banach space theory in the verification of both the FPP and the failure of FPP. Our results go in that direction. We conclude this work with some additional results and remarks.

\begin{prop}\label{prop:1sec10} Let $X$ be a Banach space with a pre-monotone basis. If $X$ contains an isomorphic copy of $\co$, then it fails the FPP for affine nonexpansive mappings.
\end{prop}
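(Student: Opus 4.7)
My plan is to reduce the statement to Corollary~\ref{cor:6sec3}(ii) by producing a pre-monotone $\co$-sequence inside $X$. The first ingredient is the assumption that $X$ contains an isomorphic copy of $\co$: this hands me a basic sequence $(u_n)\subset X$ equivalent to the unit vector basis of $\co$, i.e., a $\co$-sequence in the terminology of Section~\ref{sec:3}, which in particular is semi-normalized and weakly null.

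Next, I would apply the Bessaga--Pe\l czy\'nski selection principle to $(u_n)$ with respect to the pre-monotone Schauder basis $(e_i)$ of $X$. After passing to a suitable subsequence $(u_{n_k})$ one can find a block basic sequence $(v_k)$ of $(e_i)$ with $\sum_k \|u_{n_k}-v_k\|$ as small as desired. The standard principle of small perturbations then ensures that $(v_k)$ is itself a basic sequence equivalent to $(u_{n_k})$, and hence equivalent to the unit vector basis of $\co$. Thus $(v_k)$ is a $\co$-sequence sitting as a block basic sequence of $(e_i)$, and by Proposition~\ref{prop:1sec3} it inherits the pre-monotonicity of $(e_i)$.

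Since any pre-monotone basic sequence satisfies $\|Q_n\|=1$ for every $n\geq 1$ (testing $Q_n$ on $v_n$ gives $\|Q_n\|\geq 1$, while $\|Q_n\|\leq 1$ is the definition of pre-monotonicity), the sequence $(v_k)$ is in particular $\mathfrak{t}$-premonotone. Corollary~\ref{cor:6sec3}(ii) then yields that $X$ fails the FPP for affine nonexpansive mappings.

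The only non-trivial ingredient is the perturbation step: one has to choose $\sum_k\|u_{n_k}-v_k\|$ small enough relative to the basis constant of $(e_i)$ and the equivalence constants of $(u_n)$ so that the $\co$-equivalence transfers from $(u_{n_k})$ to $(v_k)$. This, however, is exactly the standard Bessaga--Pe\l czy\'nski bookkeeping and presents no genuine obstacle.
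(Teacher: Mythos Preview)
Your proposal is correct and follows essentially the same route as the paper's proof: the paper starts from a semi-normalized weakly null sequence equivalent to the unit basis of $\co$, applies the gliding hump method (i.e., Bessaga--Pe\l czy\'nski) to pass to a block basic sequence of $(e_i)$ that is still a $\co$-sequence, invokes Proposition~\ref{prop:1sec3} to get pre-monotonicity, and concludes via Corollary~\ref{cor:6sec3}. Your extra verification that pre-monotone implies $\mathfrak{t}$-premonotone is unnecessary (the paper states this explicitly just after the definition), but harmless.
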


\begin{proof} Let $(y_i)$ be a semi-normalized weakly null sequence in $X$ equivalent to the unit basis of $\co$. By the gliding hump method, $(y_i)$ admits a subsequence which is equivalent to a block basic sequence $(x_i)$ of the basis of $X$. Thus $(x_i)$ is a pre-monotone $\co$-sequence. By Corollary \ref{cor:6sec3}, the result follows. 
\end{proof}

\begin{rmk} It is natural to ask if every Banach space containing a copy of $\co$ must contain a pre-monotone $\co$-sequence. Dowling, Lennard and Turett \cite{DLT3} proved that if a Banach space $X$ contains a copy of $\co$ then it contains an asymptotically pre-monotone $\co$-sequence, that is, a sequence $(x_n)$ which is equivalent to the unit basis of $\co$ and for some decreasing null sequence $(\delta_n)$ in $(0,1)$ one has $\|R_n\| \leq (1 + \delta_n)$ for all $n\in \mathbb{N}$. 
\end{rmk}

\begin{prop}\label{prop:2sec10} Let $X$ be a Banach space with a Schauder basis in which every block basic sequence has a block basic sequence which is not boundedly complete. Assume that $X$ contains a pre-monotone basic sequence. Then $X$ contains a pre-monotone basic sequence which is not boundedly complete. 
\end{prop}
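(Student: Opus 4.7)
The plan is to transport $(v_n)$, after passing to a weakly null subsequence, to a block basic sequence of the basis $(e_i)$ of $X$ via Bessaga--Pelczynski, use the hypothesis there to obtain a block basis failing bounded completeness, and pull this back to a block basis of $(v_n)$; Proposition~\ref{prop:1sec3} will guarantee that the pulled-back sequence inherits pre-monotonicity.

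I would begin by observing that the hypothesis rules out any copy of $\ell_1$ inside $X$. Indeed, a copy of $\ell_1$ would, via the standard gliding hump, yield a block basic sequence $(u_n)$ of $(e_i)$ equivalent to the unit basis of $\ell_1$; but every block basic sequence of $(u_n)$ is then again equivalent to the $\ell_1$-basis and hence boundedly complete, contradicting the hypothesis applied to $(u_n)$. Consequently, by Rosenthal's $\ell_1$-theorem, the given $(v_n)$ admits a weakly Cauchy subsequence. If that subsequence is not weakly null, I exploit the fact that a basic sequence cannot be norm-Cauchy (otherwise its norm-limit would have two distinct basis expansions, contradicting semi-normalization) to extract indices $n_1<m_1<n_2<m_2<\dots$ with $\|v_{m_k}-v_{n_k}\|\geq\delta>0$. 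The sequence $u_k:=v_{m_k}-v_{n_k}$ is then a semi-normalized, weakly null block basic sequence of $(v_n)$, which by Proposition~\ref{prop:1sec3} is still pre-monotone. Replacing $(v_n)$ by $(u_k)$ if necessary, I may therefore assume $(v_n)$ itself is weakly null.

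Next I would apply the Bessaga--Pelczynski selection principle: after a further extraction, there is a block basic sequence $(w_k)$ of $(e_i)$ that is $(1+\varepsilon)$-equivalent to $(v_{n_k})$. The hypothesis applied to $(w_k)$ produces a block basic sequence $z_j=\sum_{k\in G_j}c_k w_k$ of $(w_k)$ that fails bounded completeness. Setting $\tilde z_j:=\sum_{k\in G_j}c_k v_{n_k}$ gives a block basic sequence of $(v_n)$, hence pre-monotone by Proposition~\ref{prop:1sec3}. Moreover, the map $v_{n_k}\mapsto w_k$ extends to an isomorphism sending $(\tilde z_j)$ to $(z_j)$, and since equivalence of basic sequences preserves bounded completeness, $(\tilde z_j)$ is not boundedly complete, which finishes the proof.

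The principal obstacle is the reduction to the weakly null case in the second paragraph: the Bessaga--Pelczynski step is only available under a weakly null hypothesis, and securing it for the given pre-monotone $(v_n)$ is precisely what the $\ell_1$-exclusion (deduced from the hypothesis) and the successive-differences construction are designed to accomplish.
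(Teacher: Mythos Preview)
Your argument is correct, but it takes a considerably longer route than the paper's. The paper invokes a result of Dean--Singer--Sternbach \cite[Proposition 3]{DSS}: in a Banach space with a Schauder basis $(e_i)$, \emph{every} basic sequence has a block basic sequence equivalent to a block basic sequence of $(e_i)$. Applying this directly to the given pre-monotone $(v_n)$ yields a block $(x_n)$ of $(v_n)$ equivalent to a block of $(e_i)$; the hypothesis then forces $(x_n)$ (or a further block of it) to fail bounded completeness, and Proposition~\ref{prop:1sec3} supplies the pre-monotonicity. No $\ell_1$-exclusion, no Rosenthal, no reduction to weakly null sequences is needed.

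Your approach reaches the same destination by first deducing that $X$ contains no copy of $\ell_1$ (a nice observation in its own right, and your argument for it is sound, since blocks of a boundedly complete basic sequence remain boundedly complete), then using Rosenthal and successive differences to manufacture a weakly null pre-monotone block, and finally applying Bessaga--Pe\l czy\'nski. This is more hands-on and uses only very well-known tools, at the cost of several extra steps. The ``principal obstacle'' you flag --- the reduction to the weakly null case --- is in fact an artifact of your route; the Dean--Singer--Sternbach theorem sidesteps it entirely. One minor wrinkle: you should note at the outset that $(v_n)$ may be normalized without loss of generality (normalization preserves pre-monotonicity), so that boundedness and semi-normalization are available where you use them.
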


\begin{proof} Let $(e_i)$ denote the basis of $X$ and let $(y_n)$ be a pre-monotone basic sequence in $X$. By a result due to Dean, Singer and Sternbach (see \cite[Proposition 3]{DSS}), $(y_n)$ has a block basic sequence $(x_n)$ which is equivalent to a block basic sequence with respect to $(e_i)$. By assumption, $(x_n)$ is not boundedly complete and this concludes the proof. 
\end{proof}

\begin{rmk} Examples of spaces having a basis as in the statement of Proposition \ref{prop:2sec10} include those that are $\co$-saturated. For $p\geq 1$, the $p^{\textrm{th}}$ James-Schreier space $V_p$ defined in \cite[Theorem 5.2]{BL} is an important example of a $\co$-saturated space. In fact, the standard basis $V_p$ is monotone shrinking and does not embed into a Banach space with an unconditional basis. Recall that for $X$ and $Y$ infinite-dimensional Banach spaces, one says that $X$ is $Y$-{\it saturated} if each closed, infinite-dimensional subspace of $X$ contains a subspace which is isomorphic to $Y$. We were not been able to verify whether or not $V_p$ fails the FPP. 
\end{rmk}

In what follows we shall consider some consequences of Theorem \ref{thm:B}.

\begin{cor}\label{cor:A} Let $X$ be a Banach space with a pre-monotone unconditional basis. If no subspace of $X$ is isomorphic to $\ell_1$ then the following are equivalent:
\begin{itemize}
\item[(i)] $X$ is reflexive.
\item[(ii)] $X$ has the fixed point property for affine nonexpansive mappings. 
\item[(iii)] Every closed convex subset of $B_X$ has the FPP for affine nonexpansive mappings. 
\end{itemize}
\end{cor}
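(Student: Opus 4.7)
The plan is to establish the cycle $(ii)\Leftrightarrow(iii)$, $(ii)\Rightarrow(i)$, and $(i)\Rightarrow(ii)$. The equivalence $(ii)\Leftrightarrow(iii)$ is routine: since $B_X$ is itself closed bounded convex, $(ii)\Rightarrow(iii)$ is immediate, while any closed bounded convex $C\subset X$ lies in $rB_X$ for some $r>0$, and the isometry $x\mapsto x/r$ intertwines affine nonexpansive self-maps and transports fixed points between $C$ and $C/r\subset B_X$.

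For $(ii)\Rightarrow(i)$, I would argue by contraposition. Assume $X$ is not reflexive. The James--Karlin characterizations for spaces with unconditional basis (see e.g.\ \cite[Theorem 3.3.1]{AK}) assert that the basis is shrinking iff $\ell_1\not\hookrightarrow X$, and boundedly complete iff $c_0\not\hookrightarrow X$. The hypothesis rules out $\ell_1$-copies, forcing the basis to be shrinking; non-reflexivity then precludes bounded completeness, whence $c_0$ embeds into $X$. Since the basis is pre-monotone, Proposition \ref{prop:1sec10} produces an affine nonexpansive self-map without fixed points, contradicting $(ii)$.

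For $(i)\Rightarrow(ii)$, reflexivity makes every closed bounded convex $C\subset X$ weakly compact. Given an affine nonexpansive $T:C\to C$ and $x_0\in C$, I would form the Ces\`aro means $x_n=\frac{1}{n}\sum_{k=0}^{n-1}T^k x_0\in C$; affinity yields $Tx_n-x_n=(T^n x_0-x_0)/n\to 0$ in norm, and weak compactness provides a weak cluster point $y\in C$. To conclude $Ty=y$, it suffices to establish weak-to-weak sequential continuity of $T$ on $C$. Define the ``linear part'' $L(x-x'):=T(x)-T(x')$ on $C-C$; this is well defined by affinity (if $x_1-x_1'=x_2-x_2'$ then the common midpoint $(x_1+x_2')/2=(x_2+x_1')/2$ lies in $C$, and applying $T$ with its affinity yields $T(x_1)-T(x_1')=T(x_2)-T(x_2')$), additive, and nonexpansive there. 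Writing $p$ for the Minkowski functional of the bounded symmetric set $C-C$, every $z\in\operatorname{span}(C-C)$ satisfies $\|L(z)\|=p(z)\|L(z/p(z))\|\leq p(z)\|z/p(z)\|=\|z\|$; hence $L$ extends linearly and nonexpansively to $\operatorname{span}(C-C)$, and by continuity to a bounded linear operator on the closed span, which is automatically weak-to-weak continuous. Combined with $x_n\rightharpoonup y$ and $Tx_n-x_n\to 0$, this forces $Ty=y$.

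The main obstacle is the extension step in $(i)\Rightarrow(ii)$: the linear part $L$ is a priori nonexpansive only on the bounded set $C-C$, and one must check that the nonexpansive bound survives the passage to the whole span. The Minkowski-functional identity written above is precisely what makes this go through cleanly, and is what allows us to conclude weak continuity of $T$ and hence the existence of a fixed point.
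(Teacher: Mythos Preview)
Your argument is correct and, for the implication $(iii)\Rightarrow(i)$ (your $(ii)\Rightarrow(i)$), essentially matches the paper: both deduce from James's theorems that the basis fails bounded completeness, hence $c_0$ embeds, and then invoke the pre-monotone block structure (the paper via Theorem~\ref{thm:B}(i), you via Proposition~\ref{prop:1sec10}, which are the same mechanism).

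The genuine difference is in $(i)\Rightarrow(ii)$. The paper dispatches this in one line by citing the Schauder--Tychonoff fixed point theorem (applied in the weak topology, where reflexivity makes $C$ compact). You instead give a self-contained Markov--Kakutani-style argument: Ces\`aro averages produce an approximate fixed point sequence, and you then establish weak-to-weak continuity of $T$ by extracting its linear part and showing it is a bounded operator. This is more explicit and avoids any black box; it also makes transparent \emph{why} affinity plus nonexpansiveness suffices (namely, the linear part is a contraction on $\overline{\operatorname{span}}(C-C)$, hence weakly continuous). Note, incidentally, that even the paper's appeal to Schauder--Tychonoff tacitly needs weak continuity of $T$, which is exactly what your linear-part argument supplies.

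One presentational remark: your Minkowski-functional identity $\|L(z)\|=p(z)\|L(z/p(z))\|$ already presupposes homogeneity of $L$, so as written it reads circularly. The clean order is: first extend $L$ linearly to $\operatorname{span}(C-C)$ using midpoint affinity and $[0,1]$-homogeneity on $C-x_0$ (which you essentially verify), and \emph{then} invoke the Minkowski gauge to propagate the nonexpansive bound from $C-C$ to the whole span. Since $X$ is reflexive, $C-C$ is weakly compact and hence closed, so $z/p(z)\in C-C$ and the bound goes through. This is a matter of exposition, not a gap.
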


\begin{proof} (i) $\Rightarrow$ (ii) follows from the Schauder-Tychonoff's fixed point theorem. Clearly (ii) implies (iii). Let's prove  (iii) $\Rightarrow$ (i). Assume that $X$ is not reflexive.  Taking into account that $X$ does not contain a copy of $\ell_1$, from \cite[Theorem 1 and Lemma 2]{J1} we see that $(e_i)$ cannot be boundedly complete. By the proof of Theorem \ref{thm:B} (i), the result follows. 
\end{proof}

\begin{cor}\label{cor:B} Let $X$ be a Banach space with a pre-monotone unconditional basis. If no subspace of $X$ is isomorphic to $\ell_1$, then $X$ fails the FPP for affine nonexpansive mappings if and only if $X$ contains a basic sequence which is not boundedly complete. 
\end{cor}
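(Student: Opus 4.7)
The plan is to deduce both implications from tools already present in the paper, with James's characterization of reflexivity for spaces with a Schauder basis (reflexivity is equivalent to the basis being simultaneously shrinking and boundedly complete) as the only external ingredient, exactly as was used in the proof of Corollary \ref{cor:A}.

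For the forward direction, I would begin by applying Corollary \ref{cor:A}: since $X$ fails the FPP for affine nonexpansive mappings and satisfies the hypotheses of that corollary, $X$ cannot be reflexive. Because $(e_i)$ is unconditional and $X$ contains no isomorphic copy of $\ell_1$, the cited dichotomy \cite[Theorem 1 and Lemma 2]{J1} forces $(e_i)$ to be shrinking. By James's theorem, non-reflexivity then prohibits $(e_i)$ from being boundedly complete. Hence the basis $(e_i)$ itself is the required basic sequence in $X$ that is not boundedly complete.

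For the converse, I would first observe that the existence of a basic sequence $(x_n) \subset X$ that is not boundedly complete immediately rules out reflexivity of $X$: the closed linear span $[x_n]$ would otherwise be a reflexive Banach space with Schauder basis $(x_n)$, and James's theorem would then force $(x_n)$ itself to be boundedly complete, a contradiction. With $X$ non-reflexive, and $(e_i)$ shrinking for the same reason as in the forward direction (no $\ell_1$-subspace plus unconditionality), James's reflexivity criterion shows that the pre-monotone unconditional basis $(e_i)$ is not boundedly complete. Theorem \ref{thm:B}(i) then produces a pre-monotone $\co$-sequence inside $X$, and Corollary \ref{cor:6sec3}(ii) delivers the failure of the FPP for affine nonexpansive maps.

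No single step is hard here; the main obstacle, if any, is bookkeeping the roles of the two hypotheses. The no-$\ell_1$-subspace assumption is used precisely to force the shrinking property of $(e_i)$ via James's dichotomy for unconditional bases, while the hypothesis that some basic sequence fails bounded completeness is converted, via the reflexivity characterization, into the failure of bounded completeness of $(e_i)$, which is exactly what Theorem \ref{thm:B}(i) requires.
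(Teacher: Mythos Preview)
Your argument is correct and essentially follows the paper's route. The paper's proof combines Corollary~\ref{cor:A} (so that, under the hypotheses, failure of the FPP for affine nonexpansive maps is equivalent to non-reflexivity) with Singer's sequential characterization of reflexivity \cite{S,Z} (a Banach space is reflexive iff every basic sequence in it is boundedly complete). You replace the citation to Singer by an inline argument: in the forward direction you exhibit the basis $(e_i)$ itself as a non--boundedly-complete basic sequence via James's criterion, and in the converse you apply James's criterion to the subspace $[x_n]$ to obtain non-reflexivity of $X$, after which you could simply invoke Corollary~\ref{cor:A} directly rather than re-running Theorem~\ref{thm:B}(i) and Corollary~\ref{cor:6sec3}. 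The content is the same; your version just unpacks the easy half of Singer's theorem rather than citing it.
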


\begin{proof} It suffices to apply a sequential characterization of reflexivity due to Singer \cite[Theorem 2 and  Corollary 1]{S} (see also \cite[Proposition]{Z}) in concert with Corollary \ref{cor:A}. 
\end{proof}

\begin{cor}\label{cor:C} Let $X$ be a Banach space with a $1$-unconditional basis $(e_i)$. Assume that $X$ does not contain $\ell_1$ isomorphically. Then the following assertions are equivalent:
\begin{itemize}
\item[(i)] $X$ is reflexive.
\item[(ii)] $X$ has the fixed point property for nonexpansive mappings. 
\item[(iii)] Every closed convex subset of $B_X$ has the FPP for nonexpansive mappings. 
\end{itemize}
\end{cor}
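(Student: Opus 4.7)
The plan is to derive this result as an upgrade of Corollary \ref{cor:A} from the affine nonexpansive class to the full nonexpansive class, with the missing implication (i) $\Rightarrow$ (ii) supplied by Lin's theorem in conjunction with reflexivity.

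First, the implication (ii) $\Rightarrow$ (iii) is immediate, since any closed convex subset of $B_X$ is a closed bounded convex subset of $X$. For (iii) $\Rightarrow$ (i), I will argue by contrapositive. Suppose $X$ is not reflexive. Because $(e_i)$ is unconditional and $X$ contains no isomorphic copy of $\ell_1$, James's theorem forces $(e_i)$ to be shrinking. Non-reflexivity then rules out the boundedly complete property for $(e_i)$. Since a $1$-unconditional basis is in particular pre-monotone, Theorem \ref{thm:B}(i) applies and yields a pre-monotone $\co$-sequence in $X$. Corollary \ref{cor:6sec3}(ii) then produces a closed bounded convex set $C \subset X$ together with an affine nonexpansive self-map $T\colon C \to C$ without fixed points. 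Rescaling $C$ and $T$ so that $C \subset B_X$ preserves both the affine and the nonexpansive properties, and an affine nonexpansive map is a fortiori nonexpansive; this contradicts (iii).

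For (i) $\Rightarrow$ (ii), the key input is Lin's theorem \cite{Lin1}: since the basis is $1$-unconditional and $1 < \tfrac{1}{2}(\sqrt{33} - 3)$, the space $X$ has the weak-FPP. Reflexivity forces every closed, bounded, convex subset of $X$ to be weakly compact, so weak-FPP and FPP coincide, giving (ii). Alternatively, one could invoke Corollary \ref{cor:A} to obtain FPP for affine nonexpansive maps and then upgrade using Lin's theorem to cover the full nonexpansive class.

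I do not anticipate a genuine obstacle: all the heavy lifting has already been done in Theorem \ref{thm:B}(i), Corollary \ref{cor:6sec3}, Corollary \ref{cor:A}, and Lin's theorem. The only point to be careful about is the rescaling in (iii) $\Rightarrow$ (i), which ensures that the counterexample sits inside $B_X$ rather than merely being a bounded convex set in $X$, and the observation that the (N$_k$1) construction is intrinsically affine so that Corollary \ref{cor:6sec3}(ii) feeds directly into the broader class of nonexpansive maps considered in (iii).
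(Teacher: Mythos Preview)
Your proof is correct and follows essentially the same approach as the paper. The paper's argument is more terse: for (iii) $\Rightarrow$ (i) it simply invokes Corollary \ref{cor:A} (observing implicitly that (iii) for nonexpansive maps implies (iii) for affine nonexpansive maps), whereas you unpack the content of that corollary's proof directly and make the rescaling into $B_X$ explicit; for (i) $\Rightarrow$ (ii) both you and the paper combine Lin's theorem with the weak compactness of closed bounded convex sets in reflexive spaces.
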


\begin{proof} (i) $\Rightarrow$ (ii) follows directly from Lin's theorem \cite{Lin1}, since every closed, bounded convex subset of a reflexive space is weakly compact. Clearly (ii) implies (iii). Finally, from Corollary \ref{cor:A} we see that (iii) implies (i).
\end{proof}

\begin{rmk} It is noteworthy to point out that Lin's $\ell_1$-renorming \cite{Lin2} shows that the assumption on the absence of isomorphic copies of $\ell_1$ cannot be removed from the above statements. Likewise, the assumption of not being boundedly complete is necessary in our main theorem. Our results are directly related to those in \cite{B-MJ2}. In particular, the above corollaries should be compared with \cite[Corollary 5.4]{B-MJ2}.
\end{rmk}

\begin{ex}[Schreier's space] The classical Schreier space $X_{\mathcal{S}}$ is the completion of $\coo$ with respect to the following norm
\[
\| x \|_\mathcal{S}= \sup_{E\in \mathcal{S}}\sum_{i\in E}| a_i|,\quad x=(a_i)_{i=1}^\infty\in \coo,
\]
where $\mathcal{S}=\big\{ E\in [\mathbb{N}]^{<\infty} \colon |E|\leq \min E\big\}$ is the family of so-called {\it admissible} sets. It is known that the unit basis of $X_{\mathcal{S}}$ is $1$-unconditional and not boundedly complete. In fact, one can prove that $X_{\mathcal{S}}$ contains an isometric copy of $\co$ (cf. \cite[Theorem 0.5, Proposition 0.7 and Corollary 0.8]{CS}). 
\end{ex}

\begin{rmk} Notice that the assertion (i) of Theorem \ref{thm:B} shows in particular that any renorming of $\co$ enjoying the fixed point property must be such that its unit basis cannot be pre-monotone. This fact is slightly more general than Proposition 8 in \cite{ACM}. 
\end{rmk}

\begin{ex}[A non-$1$-unconditional $\co$-sequence that is $1$-suppression unconditional] Let us consider the following norm in $\co$:
\[
\| (a_i)\|= \sup_{i, j\in \mathbb{N}}|a_i - a_j|,\quad (a_i)_{i=1}^\infty\in \co.
\]
As it was pointed out in \cite[Example 10]{ACM}, the unit basis of $\co$ is not $1$-unconditional with respect to $\|\cdot\|$. However, one easily shows that it is $1$-suppression unconditional. 
\end{ex}

\begin{rmk}  A Banach space $X$ as in Theorem \ref{thm:B} (iii) may fail to contain $\co$-sequences without the extra assumption that strongly bounded block basic sequences of the basis must dominate their subsequences. In fact, recall that the classical James's space $J_2=\big\{ (a_i)_{i=1}^\infty \in c_0 \,\big| \| (a_i)_{i=1}^\infty\|_{J_2}<\infty\big\}$ where $\| \cdot\|_{J_2}$ is defined by 
\[
\| (a_i)_{i=1}^\infty\|_{J_2}= \sup\Bigg\{ \Big( \sum_{k=1}^l | a_{p_{k+1}} - a_{p_k}|^2\Big)^{1/2}\,\colon \, 2\leq l,\, 1\leq p_1< \dots < p_l\Bigg\},
\]
is not reflexive, does not contain any subspaces isomorphic to $\co$ or $\ell_1$ and its unit basis is non-unconditional shrinking and monotone. It is worth mentioning that $J_2$ has the weak-FPP (cf. \cite{K}). 
\end{rmk}

\begin{rmk} It is known that for infinite compact Hausdorff spaces $K$, $C(K)$ does not have a $1$-suppression unconditional basis (see e.g. \cite[Remark 2.13]{ALMT}). On the other hand, it was pointed out in \cite{B-MJ1} that if $C(K)$ is isomorphic to $\co$ with $K$ metrizable, then it has the weak-FPP. However it seems to be an open question whether or not $C(K)$ has this property if $K$ is a scattered set such that $K^{(\omega)}\neq \emptyset$. 
\end{rmk}

\begin{rmk} It is worth mentioning that complementation properties of isomorphic copies of $\co$ has been useful in metric fixed point theory (cf. \cite{B-MJ2}). Cembranos \cite{C} proved that if $E$ is an infinite dimensional Banach space, then the space $C(K, E)$  of all continuous $E$-valued functions on $K$ contains a complemented copy of $\co$. In fact, her proof shows slightly more than this; it shows that $C(K,E)$ contains a complemented $1$-suppression unconditional basic sequence equivalent to the unit basis of $\co$. 
\end{rmk}

\begin{rmk}\label{rmk:PCP} Historically PCP seems to have first studied by Namioka \cite{N}. CPCP was introduced by Bourgain \cite{B}. Clearly PCP implies CPCP. It is well-known that Banach spaces with Radon-Nikod\'ym property (RNP), including separable dual spaces, have the CPCP. It is known that PCP is separably determined \cite{Bou}. Rosenthal \cite{Ros} showed that every semi-normalized basic sequence in a Banach space with PCP has a boundedly complete subsequence. It is also known that if a Banach space $X$ has an unconditional basis and fails the CPCP, then $X$ contains a $\co$-sequence (cf. \cite[Corollary 2.11]{Sch}). So, Theorem \ref{thm:D} provides a more general fact.  The Banach space $B_\infty$ constructed in \cite{GM} satisfies the CPCP, fails PCP, has a semi-normalized supershrinking Schauder basis and does not contain subspaces isomorphic to $\co$. 
\end{rmk}

\begin{rmk} The arguments used in Theorem \ref{thm:G} can also be applied to show that if $X$ has weak Banach-Saks property and $\dist_{BM}(X, Y)=1$, where $Y$ is a subspace of a Banach space $Z$ having a strongly bimonotone basis, then $X$ has the weak fixed point property. This yields a slightly generalized version of a known result of Garc\'ia-Falset \cite{GFal}. 
\end{rmk}

The following notion was introduced by C. Lennard and Nezir in \cite{LN}, where important characterizations of reflexivity in terms of the FPP were established. 

\begin{dfn}[Cascading Nonexpansive Mappings] Let $X$ be a Banach space and $C$ be a closed convex subset of $X$. Let $T\colon C\to C$ be a mapping and define $C_0=C$, $C_n=\overline{\conv}\hskip .05cm T(C_{n-1})$ for every $n\in \mathbb{N}$. The mapping $T$ is said to be cascading nonexpansive if there exists a sequence $(\lambda_n)_{n\geq 0}\subset [1,\infty)$ with $\lim_n \lambda_n=1$ and such that $\| Tx - Ty\| \leq \lambda_n\| x - y\|$ for all $x, y\in C_n$ and for all $n\geq 0$. 
\end{dfn}

Finally, we refer the reader to Benavides and Jap\'on Pineda \cite{B-MJ2} for further fixed-point characterizations of weak compactness in spaces with unconditional basis for the class of cascading nonexpansive maps. As far as we are concerned, the methods used in these last works do not allow us to obtain our results by considering the class of nonexpansive mappings (even in the special case where the basis is $1$-unconditional).

\medskip 

{\bf Conflicts of Interest.} The author declare that there are no competing interest. 

\smallskip 
 
{\bf Acknowledgements.} This work was started when the author visited the Mathematics Department of the Federal University of Amazonas (UFAM), from December 16, 2020 to January 7, 2021. For this reason, the authors also thank Professors Fl\'avia Morgana and Jeremias Le\~ao for the kind invitation and support. The author also wish to thank Professor Bruno M. Braga for helpful comments. 

The author especially thanks the reviewers for their thorough reading of the article, pointing out some gaps, errors and needs for improvement that culminated in this final version of the article.%%%%%%%%%%%%%%%%%%%%%%%%%%%%%%%%%%%%

\end{document}